\renewcommand{\@seccntformat}[1]{\bf\csname the#1\endcsname.}
\renewcommand{\section}{\@startsection{section}{1}
	\z@{.7\linespacing\@plus\linespacing}{.5\linespacing}
	{\normalfont\upshape\bfseries\centering}}
\renewcommand{\@biblabel}[1]{\@ifnotempty{#1}{#1.}}
\theoremstyle{plain}
\newtheorem{thm}{Theorem}[section]
\newtheorem{prop}[thm]{Proposition}
\theoremstyle{definition}
\newtheorem{ex}[thm]{Example}
\newtheorem{defn}[thm]{Definition}
\newtheorem{rem}{Remark}[section]
\def\R{{\mathcal R}}
\def\T{{\mathcal A}}
\def\l{{\lamda}}
\def\T{{\mathcal T}}
\def\>{\succ}
\def\<{\prec}
\def\U{{\mathcal U}}
\def\l{\alpha}
\def\l{\lambda}
\def\p{\partial}
\def\m{\mu}
\begin{document}	
	\title[Sania Asif \textsuperscript{1}, Lamei Yuan\textsuperscript{2},   Yao Wang\textsuperscript{3*}]{ Cohomology of Twisted Rota-Baxter operators on Associative~Conformal Algebra}
	%%%%%%%%%%%%%%%%%%%%%%%%%%%%%%%%%%%%%%%%%%%%%%%%%%%%%%%%%%%%%%%%%%%%%%%%%%%%%%%%
	\author{Sania Asif \textsuperscript{1},  Lamei Yuan \textsuperscript{2}, Yao Wang \textsuperscript{3*}}
%%%%%%%%%%%%%%%%%%%%%%%%%%%%%%%%%	
   \address{\textsuperscript{1,3*}School of Mathematics and Statistics, Nanjing University of information science and Technology, Nanjing 210044, Jiangsu Province, PR China.}
   \address{\textsuperscript{2}School of Mathematics and Statistics, Harbin University of Technology, Harbin 150001, PR China.}
   	%\Tddress{\textsuperscript{3}School of Mathematics and Statistics, Nanjing University of information science and Technology, Nanjing 210044, Jiangsu Province, PR China.}
	
	%%%%
	%\lddress{\textsuperscript{3}Department of Mathematics, Zhejiang University, Hangzhou, Zhejiang Province,310027,PR China.}
	
	%\email{\textsuperscript{2}hongyanyong2008@yahoo.com}

	\email{\textsuperscript{1}11835037@zju.edu.cn, 200036@nuist.edu.cn}
	\email{\textsuperscript{2}lmyuan@hit.edu.cn}
	\email{\textsuperscript{3*}wangyao@nuist.edu.cn}
%	\email{\textsuperscript{3}wzx@zju.edu.cn}
	%%%%%%%%%%%%%%%%%%%%%%%%%%%%%%%%%%%%%%%%%%%%%%%%%%%%%%%%%%%%%%%%%%%%%%%%%%%%%%%
	%%%%%%%%%%%%%%%%%%%%%%%%%%%%%%%%%%%%%%%%%%%%%%%%%%%%%%%%%%%%%%%%%%%%%%%%%%%%%
	
	\keywords{Cohomology, Linear deformation, formal deformation, Associative conformal algebra, Twisted Rota-Baxter operator }
	\subjclass[2010]{16R60, 17B05, 17B40}
	
	%[class=AMS]
	%%%%%%%%%%%%%%%%%%%%%%%%%%%%%%%%%%%%%%%%%%%%%%%%%%%%%%%%%%%%%%%%%%%%%%%%%%%%%
	%%%%%%%%%%%%%%%%%%%%%%%%%%%%%%%%%%%%%%%%%%%%%%%%%%%%%%%%%%%%%%%%%%%%%%%%%%
	\date{\today}
	\thanks{This work is supported by the Jiangsu Natural Science Foundation Project (Natural Science Foundation of Jiangsu Province), Relative Gorenstein cotorsion Homology Theory and Its Applications (No.BK20181406).}
	%%%%%%%%%%%%%%%%%%%%%%%%%%%%%%%%%%%%%%%%%%%%%%%%%%%%%%%%%%%%%%%%%%%%%%%%%
	%%%%%%%%%%%%%%%%%%%%%%%%%%%%%%%%%%%%%%%%%%%%%%%%%%%%%%%%%%%%%%%%%%%%%%%%%
	\begin{abstract} In this paper, we examine the concept of twisted Rota-Baxter (TRB) operators on associative conformal algebras. Our strategy begins by constructing an $L_\infty$-algebra using Maurer-Cartan elements derived from $H$-twisted Rota-Baxter ($H$-TRB) operators on associative conformal algebras. This structure leads us to explore the cohomology of the conformal $H$-TRB operator, which is characterized as the Hochschild cohomology of a specific associative conformal algebra with coefficients in a conformal bimodule. Furthermore, we study the linear and formal deformations of conformal $H$-TRB operators to explore the application of cohomology.
\end{abstract}\footnote{The third author is the corresponding author.}
\maketitle \section{Introduction}\label{introduction}%The associative and Lie conformal algebras structure theory was developed and generalized in \cite{8} and \cite{9}. Some features of the structure theory of conformal algebras (and their representations) of infinite type have been considered by various researchers in a series of works. In this field, one urgent problem is finding the conformal representation, cohomologies, and deformation of associative and Lie conformal algebras. Moreover, the structure theory of associative conformal algebras with a finite faithful representation of associative conformal bialgebra was developed in \cite{10} and \cite{11}, which can be viewed as a conformal analog of associative bialgebras \cite{12} and also as an associative analog of conformal bialgebras \cite{13}. G. Baxter first introduced the concept of Rota-Baxter operators in \cite{14} in its study about the fluctuation theory of probability. Following that, G.-C. Rota \cite{15} and P. Cartier \cite{16} studied the connection between Rota-Baxter operators and combinatorics. A Rota-Baxter operator is an inverse operator of an (invertible) derivation, and we call it an algebraic abstraction of the integral operator. In \cite{17}, Aguiar demonstrated how Rota-Baxter operators result in Loday's dendriform algebras. Rota-Baxter operators and associated structures have received much attention in the literature during the past twenty years(see \cite{18} for more information). Rota-Baxter operator deformations on Lie algebras have recently been developed in \cite{2}. The results have been generalized to associative algebras by the author in \cite{4}. Not only the Rota-Baxter operators and their generalizations on associative and Lie algebras are essential, but their significance also lies in conformal field theory. 
%%%%%%%%%%%%%%%%%%%%%%%%%%%%%%%%%%%%%%%%
 The theory of associative and Lie conformal algebra was evolved and generalized in \cite{8} and \cite{9}. Multiple researchers have considered certain perspectives of this class of algebras  and explored their representations theory. One immediate problem in this field is the determination of the conformal representation, cohomologies, and deformations of associative and Lie conformal algebras. Further, the structure theory of associative conformal algebras with a finite faithful representation of associative conformal bialgebra was established in \cite{11, 10}, which can be seen as a conformal analogue of associative bialgebras \cite{12} and also an associative analogue of conformal bialgebras \cite{13}. G. Baxter first proposed the concept of Rota-Baxter operators in \cite{14} while investigating probability fluctuation theory. Afterward,  P. Cartier \cite{16} and G.-C. Rota \cite{15} examined the connection between Rota-Baxter operators and combinatorics. As an algebraic abstraction, the Rota-Baxter operator serves as  an inverse operator of the (invertible) derivation. In \cite{17}, Aguiar illustrated how Rota-Baxter operators lead to Loday's dendriform algebras. Rota-Baxter operators and associated structures have acquired significant attention in the literature over the past two decades (for further information, see \cite{SYZ, 18}).
Recently,  Rota-Baxter operator's deformations on Lie algebras were explored in \cite{2}, and these results were generalized to associative algebra case in \cite{4}. The importance of Rota-Baxter operators and their generalizations in associative and Lie algebras surpasses their mathematical significance and finds applicability in conformal field theory.
\par %Recently, the detailed study on the cohomology of associative conformal algebra encouraged us to study conformal Rota-Baxter operators and their related structures. In \cite{3,20}, a generalization of a Rota-Baxter operator in the presence of a bimodule that is also known as relative Rota-Baxter operator or $\mathcal{O}$-operator was introduced, where more specific properties of $\mathcal{O}$-operators and Nijenhuis operators on associative conformal algebras were discussed. Let $\T$ be an associative conformal algebra and $\M$ be a conformal $\T$-bimodule. A $\mathbb{C}[\p]$-linear map $\R: \M \to \T$ is said to be a generalized Rota-Baxter operator if $\R$ satisfies $$\R (m)_\l \R (n)= \R(\R (m)_\l n + m_\l \R (n)),\textit{ for }m, n \in \M.$$ Generalized Poisson structures have an operator counterpart in Rota-Baxter operators. The above literature prompted us to propose the concept of conformal $H$-twisted Rota-Baxter operators as an operator analog of twisted Poisson structures (see \cite{21}). In this case, $H$ represents a Hochschild $2$-cocycle of $\T$ with coefficients in the conformal bimodule $\M$.
\par Lately, there has been substantial curiosity in exploring the cohomology of associative conformal algebras, which has prompted further examination of conformal Rota-Baxter operators and their associated structures. Previous studies, such as those referenced in \cite{3,20}, introduced the concept of relative Rota-Baxter operator or $\mathcal{O}$-operator which  generalizes the Rota-Baxter operator in the presence of a bimodule. These studies focused on exploring precise properties of $\mathcal{O}$-operators and Nijenhuis operators within the context of associative conformal algebras. Let $\T$ represent an associative conformal algebra and $\U$ a conformal $\T$-bimodule. A $\mathbb{C}[\p]$-linear map $\R: \U \to \T[\l]$ is defined as a conformal generalized Rota-Baxter operator or $\mathcal{O}$-operator if the following equation holds for all $u,v \in \U$: $$\R (u)_\l \R (v)= \R(\R (u)_\l v + u_\l \R (v)),$$ It is important to note that the notion of generalized Poisson structures is analogous to Rota-Baxter operators in terms of their operator counterparts. Extending on the earlier cited research and scholarly works, we propose the concept of conformal $H$-twisted Rota-Baxter operators $H$-TRB as an operator analog of twisted Poisson structure, as discussed in \cite{das,21}. In this scenario, the symbol $H$ denotes a Hochschild $2$-cocycle of $\T$ with coefficients in the conformal bimodule $\U.$
\par The objective of this research is to examine the cohomology and deformations of $H$-TRB operators on associative conformal algebras. Firstly, we established an $L_\infty$-algebra with the help of graded Lie algebra formed in \cite{4}, in which the Maurer-Cartan elements resemble $H$-TRB operators. This description permits the establishment of a cohomology theory for conformal $H$-TRB operator $\R$ on associative conformal algebras $\T$. Following that, we consider this cohomology to be the Hochschild cohomology of a particular algebra with coefficients in an appropriate conformal bimodule. Specifically, if  $\R:\U\to \T[\l]$ is a conformal $H$-TRB operator, we can conclude that $\U$ is an associative conformal algebra, see \cite{20}. We also illustrate the conformal $\U$-bimodule structure on $\T$. Additionally, we establish that the Hochschild cohomology and the cohomology of $R$ are isomorphic. We extend the  formal deformation theory of Gerstenhaber (see \cite{22}) to encompass the conformal $H$-TRB operator on $\T$. Within a deformation of a conformal $H$-TRB operator $\R$, We illustrate that the linear term in the cohomology of $R$ is a $1$-cocycle. This linear term is referred to as the "infinitesimal of the deformation". Additionally, we exhibit that equivalent deformations have cohomologous infinitesimals. We also establish a necessary condition for the rigidity of $\R$ in relation to the Nijenhuis elements and define Nijenhuis elements in relation to conformal $H$-TRB operators $\R$ .
\par %The structure of the paper is as follows. We review $H$-twisted Rota-Baxter operators on associative conformal algebras in Section $2$ and establish a few key terms to support our findings in the next sections. In Section $3$, we first provide the Maurer-Cartan characterization of a $H$-twisted Rota-Baxter operator $\R$, and then we define the cohomology of $\R$ using this characterization. Additionally, we consider the cohomology of $\R$ to be the Hochschild cohomology of a suitable algebra with coefficients in a suitable conformal bimodule. In Section $4$, deformations of $H$-twisted Rota-Baxter operators are discussed.
%%%%%%%%%%%%%%%%%%%%%%%%%%%%%%%%%
This paper is divided into several sections. In Section $2$, we provide a thorough review of $H$-twisted Rota-Baxter ($H$-TRB) operators on associative conformal algebras. This review serves as a basis and familiarizes important terminology that will be used in later sections. Section $3$ focuses on the Maurer-Cartan characterization of conformal $H$-TRB operators. We first describe this characterization and then explain the cohomology of $\R$ according to it. Additionally, we consider the cohomology of $\R$ to be the Hochschild cohomology of a suitable algebra with coefficients in a suitable conformal bimodule. In Section $4$, we discuss deformations of 
 conformal $H$-TRB operators in detail. We analyze various aspects and implications of these deformations within the given context.
\section{Preliminaries}
Consider an associative conformal algebra  $\T$  and a multiplication map on $\T$ denoted by $\tau_\l : \T\otimes \T \to \T[\l]$ where $\tau_\l(p,q) = p_\l q,$ for $p,q \in \T$. A vector space $\U$ together with the two conformal sesqui-linear maps $\mathfrak{l}_\l : \T \otimes \U \to\U[\l]$ and $\mathfrak{r}_\l: \U\otimes \T\to  \U[\l]$, defined by $(p, u) \mapsto p_\l u$  and $(u, p) \mapsto u _\l p$ respectively is called a conformal  $\T$-bimodule. These maps are referred to as left and right actions of $\l$ respectively, that satisfy the following identities:
\begin{enumerate}
	\item $(p_{\l}u)_{\l+\m}q= p_{\l} (u_{\m} q)$,
	\item $(u_{\l}p)_{\l+\m}q = u_{\l} (p_{\m} q)$,
	\item $(p_{\l}q)_{\l+\m} u= p_{\l} (q_{\m}u)$, 
\end{enumerate}for $p, q \in\T$ and $u\in \U$. 
\par The computation of Hochschild cohomology of an associative conformal algebra $\T$ with coefficients in the conformal $\T$-bimodule $\U$ involves the use of the cohomology of the cochain complex $(C^\circledcirc_{Hoch}(\T, \U), \delta_{Hoch})$. Here $$C^\circledcirc_{Hoch}(\T, \U)=\bigoplus_{n\geq 0}C^n_{Hoch}(\T, \U)$$  and  $$C^n_{Hoch}(\T, \U) = Hom(\T^{\otimes n}, \U),$$ for $n \geq 0$, is the space of $n$ cochains that encompass all multilinear maps of the following form\begin{equation*}
\begin{aligned}
f_{\l_1,\cdots,\l_{n-1}} :& \T^{\otimes n}\to \U[\l_1,\cdots,\l_{n-1}]\\
&p_1 \otimes\cdots\otimes p_n \mapsto f_{\l_1,\cdots,\l_{n-1}}
(p_{1},\cdots ,p_{n})
\end{aligned}
\end{equation*}Above defined map satisfies the conformal sesqui-linearity condition, given by
\begin{eqnarray*}f_{\l_1,\cdots,\l_{n-1}}
(p_1,\cdots,\p (p_j)
,\cdots,p_n) &= -\l_j f_{\l_1,\cdots,\l_{n-1}}
(p_{1},\cdots ,p_{n}),~~~~ j = 1,\cdots ,n-1, \\
f_{\l_1,\cdots,\l_{n-1}}(p_1,\cdots ,\p (p_n)) &= (\p +\l_1+\cdots+\l_{n-1})f_{\l_1,\cdots,\l_{n-1}}
(p_{1},\cdots,p_{n}).
\end{eqnarray*}
Additionally, the conformal Hochschild differential is given by the map $$\delta_{Hoch}:C^n_{Hoch}(\T, \U)\to C^{n+1}_{Hoch}(\T, \U),$$ defined as follows  \begin{equation}
\begin{aligned}
(\delta_{Hoch}f)_{\l_1, \l_2, \cdots, \l_n}(p_1, \cdots, p_{n+1}) &= {p_{1}}_{\l_1} f_{\l_2, \cdots, \l_n}(p_2, \cdots, p_{n+ 1})\\&+ \sum_{j= 1}^{n}(-1)^{j}f_{\l_1, \cdots, \l_j+ \l_{j+1}, \cdots, \l_n}(p_1, \cdots, p_{j- 1},{ p_{j}}_{\l_j} p_{j+1}, \cdots , p_{n+ 1}) \\&+ (-1)^{n+1}f_{\l_1,\cdots,\l_{n-1}}(p_{1},\cdots,p_{n})_{\l_1+ \l_2+\cdots+ \l_n} p_{n+1},
\end{aligned}
\end{equation}
for $f_{\l_1, \l_2, \cdots, \l_{n-1}}\in  C^n_{Hoch}(\T, \U)$.\par
Let us assume that $H_\l\in C^2_{Hoch}(\T, \U)$ that represents  a Hochschild $2$-cocycle. In other words $H_\l : \T^{\otimes2}\to \U[\l]$ fulfills the following equation for all $p_1, p_2,p_3  \in \T.$ 
$${p_1}_{\l_1} H_{\l_2}(p_2, p_3)- H_{\l_1+\l_2}({p_1}_{\l_1}p_2, p_3)+ H_{\l_1}(p_1, {p_2}_{\l_2}p_3)- H_{\l_1}(p_1 , p_2)_{\l_1+\l_2} p_3 = 0.$$
\begin{rem}  It should be noted that a $2$-cocycle $H$ can induce the structure of associative conformal algebra on the direct sum of vector spaces $\T\oplus \U$, which is referred to as the conformal $H$-twisted semi-direct product. The multiplication in this structure is described as follows:
$$(p, u)_{\l}^H (q, v) = (p_\l q, p _\l v + u_\l q + H_\l(p, q)),\text{ for }(p, u), (q, v) \in \T \oplus \U.$$
We use the notation $\T \ltimes_\l^{H} \U$ to represent the conformal $H$-twisted semi-direct product.
\end{rem}
\begin{defn}
A conformal $H$-twisted Rota-Baxter operator ($H$-TRB) is a  $\mathbb{C}[\p]$-linear map $\R : \U \to \T[\l]$, satisfying  $$\R (u)_\l \R(v) = \R(u _\l \R(v) + \R (u) _\l v + H_\l(\R (u), \R (v)))$$
for all $u,v\in  \U.$\end{defn} 
    The following result can be easily concluded.
\begin{prop}\label{prop5.2}
A conformal $H$-TRB operator on $\T$ is a $\mathbb{C}[\p]$-linear map $\R : \U \to \T[\l] $ such that the graph
of $\R$, defined by  $$Gr(\R) = \{(\R (u),u)|u \in \U\}$$
forms a subalgebra of the conformal $H$-twisted semi-direct product of $\T$ and $\U$ denoted by $\T\ltimes_\l^H \U$.
\end{prop}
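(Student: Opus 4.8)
The plan is to verify directly that the defining identity of a conformal $H$-TRB operator is exactly the closure condition for $Gr(\R)$ under the $\l$-product of $\T\ltimes_\l^H\U$, so that the two conditions are equivalent. First I would observe that $Gr(\R)=\{(\R(u),u)|u\in\U\}$ is automatically a $\mathbb{C}[\p]$-submodule of $\T\oplus\U$: since $\R$ is $\mathbb{C}[\p]$-linear, $\p(\R(u),u)=(\R(\p u),\p u)\in Gr(\R)$. Hence the content of the statement is entirely about closure under the conformal multiplication $\cdot_\l^H$, and both implications will follow from a single computation.

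Next I would take arbitrary $(\R(u),u),(\R(v),v)\in Gr(\R)$ and expand their product by means of the multiplication rule recalled in the preceding remark,
$$(\R(u),u)_\l^H(\R(v),v)=\bigl(\R(u)_\l\R(v),\ \R(u)_\l v+u_\l\R(v)+H_\l(\R(u),\R(v))\bigr).$$
This element of $(\T\oplus\U)[\l]$ belongs to $Gr(\R)[\l]$ precisely when its first component is the image under $\R$ of its second component, i.e.
$$\R(u)_\l\R(v)=\R\bigl(\R(u)_\l v+u_\l\R(v)+H_\l(\R(u),\R(v))\bigr),$$
which is exactly the identity in the definition of a conformal $H$-TRB operator. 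Reading this equivalence in one direction shows that if $\R$ is a conformal $H$-TRB operator then $Gr(\R)$ is closed under $\cdot_\l^H$, hence a subalgebra of $\T\ltimes_\l^H\U$; reading it in the other direction gives the converse. Together they establish the proposition.

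The computation is routine; the only point that deserves attention is the bookkeeping of the conformal parameter, since each $\R(u)$ itself lies in $\T[\l]$. One must interpret the subalgebra condition as $Gr(\R)_\l Gr(\R)\subseteq Gr(\R)[\l]$ and note that membership in $Gr(\R)[\l]$ is detected componentwise through the $\mathbb{C}[\p,\l]$-linear extension of $\R$; once this convention is fixed, the displayed computation yields the equivalence with no further difficulty, so there is no genuine obstacle in this proposition.
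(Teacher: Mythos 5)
Your proposal is correct and follows essentially the same route as the paper: compute $(\R(u),u)_\l^H(\R(v),v)$ and observe that membership of the result in $Gr(\R)[\l]$ is exactly the $H$-TRB identity, giving both implications at once. The extra remarks on $\mathbb{C}[\p]$-linearity and the bookkeeping of the conformal parameter are sensible additions but do not change the argument.
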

\begin{proof}For any $(\R(u),u), (\R(v),v)\in Gr(\R)$, we have
$$(\R(u),u)_\l (\R(v),v) = (\R(u)_\l\R(v), \R(u)_\l  v+ u_\l \R(v)+H_\l(\R(u), \R(v))).$$	
Hence $\R$ is a conformal $H$-TRB operator iff\begin{equation*}
(\R(u)_\l\R(v), \R(u)_\l v+ u_\l \R(v)+H_\l(\R(u), \R(v)))\in Gr(\R)[\l].
\end{equation*} \end{proof}
\begin{ex} Any Rota-Baxter operator of weight $0$ on $\T$ is a conformal $H$-TRB operator with $H = 0$.\end{ex}
\begin{ex} 
    Consider an associative conformal algebra denoted by $\T$, a conformal $\T$-bimodule denoted by $\U$, and an invertible Hochschild $1$-cochain denoted by $h: \T \to \U$. In this context, it can be shown that the inverse of $h$, denoted by $\R = h^{-1}$, acts as an $H$-TRB operator. Where, the operator $H$ is given by $H = -\delta_{\text{Hoch}}h$, as described in Example 4.4 of \cite{3}.
\end{ex}
\begin{prop}\label{prop5.5}
Consider a $\mathbb{C}[\p]$-linear map $\R : \U \to \T[\l]$ as a conformal $H$-TRB operator. In this case, $\U$ has an associative conformal algebra structure with $\l$-product defined as follows:
$$u *_\l v = u_\l \R (v) + \R(u) _\l v + H_\l(\R (u), \R (v)), \textit{ for } u, v\in \U.$$
\end{prop}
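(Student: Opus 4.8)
The plan is to check the two conditions that make $(\U,*_\l)$ an associative conformal algebra: that $*_\l$ is a conformal sesqui-linear map $\U\otimes\U\to\U[\l]$, and that it is associative in the sense that
$$(u*_\l v)*_{\l+\m} w = u*_\l(v*_\m w)\qquad\text{for all }u,v,w\in\U .$$

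First I would dispatch conformal sesqui-linearity. Since $\R$ is $\mathbb{C}[\p]$-linear, $\R(\p u)=\p\R(u)$; and each of the three summands defining $u*_\l v$ is obtained by composing $\R$ with the left action $p\mapsto p_\l u$, the right action $u\mapsto u_\l p$, or the conformal $2$-cochain $H_\l$, all of which are already conformal sesqui-linear. A term-by-term computation then yields $(\p u)*_\l v=-\l(u*_\l v)$ and $u*_\l(\p v)=(\p+\l)(u*_\l v)$; this part is routine.

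The heart of the argument is the remark that the defining identity of a conformal $H$-TRB operator may be read as
$$\R(u*_\l v)=\R\bigl(u_\l\R(v)+\R(u)_\l v+H_\l(\R(u),\R(v))\bigr)=\R(u)_\l\R(v),$$
which collapses all nested occurrences of $\R$. Setting $a=\R(u),\ b=\R(v),\ c=\R(w)$ and using this identity together with the conformal $\T$-bimodule axioms (1)--(3), I would expand both sides of the associativity relation. On the left-hand side, axiom (2) rewrites $(u_\l b)_{\l+\m}c$ as $u_\l(b_\m c)$, axiom (1) rewrites $(a_\l v)_{\l+\m}c$ as $a_\l(v_\m c)$, and axiom (3) rewrites $(a_\l b)_{\l+\m}w$ as $a_\l(b_\m w)$; on the right-hand side these same three expressions appear directly after expanding $v*_\m w$ and using $\R(v*_\m w)=b_\m c$. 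These three pairs of terms cancel, and what survives of $(u*_\l v)*_{\l+\m}w-u*_\l(v*_\m w)$ is exactly
$$H_\l(a,b)_{\l+\m}c+H_{\l+\m}(a_\l b,c)-a_\l H_\m(b,c)-H_\l(a,b_\m c),$$
which is the Hochschild $2$-cocycle identity for $H$, evaluated on $a,b,c\in\T$ with parameters $\l,\m$, rearranged; hence it vanishes and associativity follows.

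The only genuinely delicate point is the bookkeeping of the spectral parameters $\l,\m,\l+\m$: one must match each product against the correct one of the three bimodule axioms and invoke the cocycle condition with the right choice of the two indices. Once the reduction $\R(u*_\l v)=\R(u)_\l\R(v)$ is in hand, however, there is no real obstacle — every remaining step is forced, and the $2$-cocycle property of $H$ is precisely what is needed to close the computation.
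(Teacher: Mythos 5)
Your proof is correct, and it is exactly the computation this paper leaves out (the proposition is stated without proof, the authors deferring to the literature): the key reduction $\R(u*_\l v)=\R(u)_\l\R(v)$, the three cancellations coming from the conformal bimodule axioms with the parameter assignments you indicate, and the identification of the four surviving terms with the Hochschild $2$-cocycle condition evaluated at $(\R(u),\R(v),\R(w))$ with parameters $(\l,\m)$ are precisely what is needed. The only detail worth making explicit is that the residual expression $H_\l(a,b)_{\l+\m}c+H_{\l+\m}(a_\l b,c)-a_\l H_\m(b,c)-H_\l(a,b_\m c)$ is the \emph{negative} of the cocycle identity as written in Section 2, so it vanishes as claimed.
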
	By modifying the conformal $H$-TRB operator, we build a new conformal $H$-TRB operator with the Hochschild cocycle. We start with the following proposition:
\begin{prop} Assuming $\T$, $\U$  and $H$ are all associative conformal  algebra,  a conformal $\T$-bimodule and a Hochschild $2$-cocycle respectively.
We can establish that there exists an isomorphism between the conformal $H$-twisted semi-direct product $\mathcal{T} \ltimes_\ell^H \mathcal{U}$ and the conformal $H + \delta_h$-twisted semi-direct product $\mathcal{T} \ltimes_\l^{H + \delta_h} \mathcal{U}$, where $\delta_h$ is a Hochschild $1$-cochain.\end{prop}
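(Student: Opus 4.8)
The plan is to exhibit an explicit isomorphism and verify its properties by a direct computation. Let the Hochschild $1$-cochain be a conformally linear map $h:\T\to\U$, and put $\delta_h:=\delta_{Hoch}h\in C^2_{Hoch}(\T,\U)$. Because $\delta_h$ is a Hochschild coboundary it is a fortiori a $2$-cocycle (indeed $\delta_{Hoch}(\delta_{Hoch}h)=0$), so $H+\delta_h$ is again a $2$-cocycle, and by the Remark above the space $\T\oplus\U$ does carry the associative conformal algebra structure $\T\ltimes_\l^{H+\delta_h}\U$. I would then define
\[
\Phi:\T\oplus\U\longrightarrow\T\oplus\U,\qquad \Phi(p,u)=\bigl(p,\,u-h(p)\bigr),\qquad (p,u)\in\T\oplus\U,
\]
and claim that $\Phi:\T\ltimes_\l^H\U\to\T\ltimes_\l^{H+\delta_h}\U$ is the desired isomorphism.

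First I would observe that $\Phi$ is an isomorphism of $\mathbb{C}[\p]$-modules: it is $\mathbb{C}[\p]$-linear since $h$ is, and it is a bijection whose inverse is $(p,u)\mapsto(p,u+h(p))$. The key step is then to show that $\Phi$ is compatible with the two $\l$-products, i.e.
\[
\Phi\bigl((p,u)_\l^H(q,v)\bigr)=\bigl(\Phi(p,u)\bigr)_\l^{H+\delta_h}\bigl(\Phi(q,v)\bigr)\qquad\text{for all }(p,u),(q,v)\in\T\oplus\U.
\]
Expanding the left side with the definition of $_\l^H$ and applying $\Phi$ gives $\bigl(p_\l q,\ p_\l v+u_\l q+H_\l(p,q)-h(p_\l q)\bigr)$, while expanding the right side with the definition of $_\l^{H+\delta_h}$ gives $\bigl(p_\l q,\ p_\l v+u_\l q+H_\l(p,q)-p_\l h(q)-h(p)_\l q+(\delta_h)_\l(p,q)\bigr)$. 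The $\T$-components coincide, and the $\U$-components coincide precisely when $-h(p_\l q)=-p_\l h(q)-h(p)_\l q+(\delta_h)_\l(p,q)$, that is, $(\delta_h)_\l(p,q)=p_\l h(q)-h(p_\l q)+h(p)_\l q$ --- which is exactly the $n=1$ instance of the formula defining $\delta_{Hoch}$. Hence $\Phi$ is an algebra homomorphism, and being a $\mathbb{C}[\p]$-linear bijection it is an isomorphism of associative conformal algebras.

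I expect no genuine obstacle: the entire content of the argument is the single expansion above, which collapses to the Hochschild coboundary identity by design. The only points that need attention are (i) that $h$ must satisfy the degree-one conformal sesqui-linearity condition, so that $\Phi$ is a well-defined morphism of conformal algebras and all the displayed terms are legitimate elements of $\U[\l]$; and (ii) the sign convention in the definition of $\delta_h$, which dictates the choice $\Phi(p,u)=(p,u-h(p))$ --- with $\delta_h=\delta_{Hoch}h$ in the sign convention of the differential above, the map $u\mapsto u-h(p)$ is the correct one rather than $u\mapsto u+h(p)$.
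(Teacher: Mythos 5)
Your proof is correct: the map $\Phi(p,u)=(p,u-h(p))$ is exactly the map $\varPhi_h$ that the paper uses immediately after this proposition (there restricted to $Gr(\R)$), and your expansion reduces correctly to the $n=1$ Hochschild coboundary identity $(\delta_h)_\l(p,q)=p_\l h(q)-h(p_\l q)+h(p)_\l q$. The paper itself gives no proof, deferring to Proposition 4.3 of the cited reference of Yuan, so your write-up simply supplies the standard explicit verification consistent with that source.
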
\begin{proof}For proof, readers are refer to \cite{3}(see Proposition 4.3).
\end{proof}Consider a conformal $H$-TRB operator  $\R :\U\to \T[\l]$  and  the subalgebra of the conformal $H$-twisted semi-direct product represented by $Gr(\R)\subset \T \ltimes_{\l}^{H} \U$. For any Hochschild $1$-cochain $h$, the set of elements $\varPhi_{h}(Gr(\R)) = \{(\R (u),u- h(\R (u)))|\quad u \in \U\}$ is a subalgebra within the algebra $\T \ltimes_\l^{H+\delta_h}\U$. It is important to mention that, $\varPhi_{h}(Gr(\R))$ may not necessarily correspond to the graph of $\R$. If the inverse of the map $(id- h\circ\R): \U \to \U$ exists, then $\varPhi_h(Gr(\R ))$ is the graph of the map $\R (id- h\circ \R)^{-1}$. In this context, $\R (id - h \circ \R)^{-1}$ is referred as  $(H + \delta_h)$-TRB operator.\\
Next, we present a perturbation to a conformal $H$-TRB operator $\R$ by applying a  Hochschild $1$-cocycle. Consider the graph $Gr(\R) \subset \T \ltimes_\l^H \U$, which represents a subalgebra of the twisted semi-direct product. We then explore the deformed subspace by using any Hochschild $1$-cocycle $h'$.
\begin{equation}
\xi_{h'}(Gr(\R )) := \{(\R (u),u + h'(\R (u)))|~ u\in \U \}
\end{equation}Then $\xi_h'(Gr(\R )) \subset \T \ltimes_\l^H \U$ is a subalgebra as
\begin{equation*}
\begin{aligned}
&(\R( u),u + h'(\R (u))) ._\l^H (\R (v),v + h'(\R (v)))
\\&= (\R(u)_\l\R (v),(\R (u))_\l v+ (\R (u)) _\l h'(\R (v)) + u _\l (\R (v)) +h'(\R (u))_\l \R (v) + H_\l(\R (u), \R (v)))
\\&= (\R(u)_\l\R (v),(\R (u))_\l v + u _\l (\R (v)) + h'(\R (u)_\l \R (v)) + H_\l(\R (u), \R (v)))\in  \xi_{h'}(Gr(\R)).
\end{aligned}
\end{equation*}
If $id+ h'\circ \R$  from  $\U$ to $\U$ is bijective, then the subset $\xi_{h'}(Gr(\R ))\subset \T\ltimes_\l^H \U$ represents the graph of the  map $\R (id + {h'}\circ \R)^{-1}$. In this context, $h'$ is referred to as $\R$-admissible $1$-cocycle. Therefore, the  map $\R (id + {h'}\circ\R )^{-1}$ satisfies the properties of a conformal $H$-TRB operator as stated in the  Proposition \ref{prop5.2}. This particular conformal $H$-TRB operator $\R$, is denoted by $\R_{h'}$.
\begin{prop}
	Suppose that we have a conformal $H$-TRB operator $\R$ and an $\R$-admissible $1$-cocycle $h'$. Then the derived  associative conformal algebra structures on $\U$,  from the conformal $H$-TRB operators $\R$ and $\R_{h'}$ are isomorphic.
\end{prop}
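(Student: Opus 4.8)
The plan is to produce the isomorphism explicitly, namely the $\mathbb{C}[\p]$-linear map
$\phi := \mathrm{id}_{\U} + h'\circ\R : \U \to \U$. First I would recall from Proposition~\ref{prop5.5} that the product induced on $\U$ by $\R$ is $u *_\l v = u_\l\R(v) + \R(u)_\l v + H_\l(\R(u),\R(v))$, and that the product $*'$ induced by the $H$-TRB operator $\R_{h'} := \R(\mathrm{id}+h'\circ\R)^{-1}$ is $u *'_\l v = u_\l\R_{h'}(v) + \R_{h'}(u)_\l v + H_\l(\R_{h'}(u),\R_{h'}(v))$. Since $h'$ is $\R$-admissible, $\phi = \mathrm{id}+h'\circ\R$ is a bijection, and it is $\mathbb{C}[\p]$-linear because $\R$ and the $1$-cochain $h'$ are; so it only remains to check that $\phi$ carries $*$ to $*'$.

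I would do this through the graphs, so as to reuse the computation already carried out above. By Proposition~\ref{prop5.2} together with the identity $\R(u)_\l\R(v) = \R(u*_\l v)$, the assignment $\iota_{\R}\colon u\mapsto(\R(u),u)$ is an isomorphism of associative conformal algebras from $(\U,*)$ onto the subalgebra $Gr(\R)\subset\T\ltimes_\l^H\U$; likewise $\iota_{\R_{h'}}\colon w\mapsto(\R_{h'}(w),w)$ is an isomorphism from $(\U,*')$ onto $Gr(\R_{h'})\subset\T\ltimes_\l^H\U$. The displayed computation preceding the proposition shows that $\xi_{h'}\colon(\R(u),u)\mapsto(\R(u),u+h'(\R(u)))$ satisfies $\xi_{h'}(x)\,{}_\l^H\,\xi_{h'}(y) = \xi_{h'}(x\,{}_\l^H\,y)$ for all $x,y\in Gr(\R)$ — the key point there being the $1$-cocycle relation $\R(u)_\l h'(\R(v)) + h'(\R(u))_\l\R(v) = h'(\R(u)_\l\R(v))$, i.e.\ $\delta_{Hoch}h' = 0$ — and that its image is precisely $\xi_{h'}(Gr(\R)) = Gr(\R_{h'})$. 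As $\xi_{h'}$ is evidently a linear bijection onto that image, it is an isomorphism $Gr(\R)\xrightarrow{\sim}Gr(\R_{h'})$ of associative conformal algebras.

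Composing, $(\U,*)\xrightarrow{\iota_{\R}}Gr(\R)\xrightarrow{\xi_{h'}}Gr(\R_{h'})\xrightarrow{\iota_{\R_{h'}}^{-1}}(\U,*')$ is an isomorphism of associative conformal algebras. Tracing $u$ along the chain gives $u\mapsto(\R(u),u)\mapsto(\R(u),u+h'(\R(u)))\mapsto u+h'(\R(u))$; the last step is legitimate because $\R_{h'}(u+h'(\R(u))) = \R(\mathrm{id}+h'\circ\R)^{-1}(\mathrm{id}+h'\circ\R)(u) = \R(u)$, so $(\R(u),u+h'(\R(u)))$ really does lie on $Gr(\R_{h'})$ and its second-coordinate projection is $u+h'(\R(u))$. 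Hence the composite equals $u\mapsto u+h'(\R(u)) = \phi(u)$, which proves that $\phi$ is the desired isomorphism.

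The only genuine obstacle is the homomorphism property of $\xi_{h'}$, and this is already settled by the computation immediately preceding the statement; everything else — bijectivity, $\mathbb{C}[\p]$-linearity, and the fact that $\iota_{\R}$ and $\iota_{\R_{h'}}$ are algebra maps — is routine bookkeeping from Propositions~\ref{prop5.2} and~\ref{prop5.5} and the $\R$-admissibility of $h'$. If a fully self-contained argument is preferred, one may instead substitute $\R_{h'} = \R\phi^{-1}$ into the formula for $*'$ and verify $\phi(u*_\l v) = \phi(u)*'_\l\phi(v)$ by direct expansion, reassembling the cross terms with the sesqui-linearity identities (1)--(3) and $\delta_{Hoch}h' = 0$.
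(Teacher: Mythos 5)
Your proof is correct, and its algebraic content coincides with the paper's: both hinge on the identity $\R_{h'}\circ(\mathrm{id}+h'\circ\R)=\R$ and on the $1$-cocycle relation $\R(u)_\l h'(\R(v))+h'(\R(u))_\l\R(v)=h'(\R(u)_\l\R(v))$. The packaging differs. The paper verifies directly that $\phi=\mathrm{id}+h'\circ\R$ intertwines the two products, expanding $\phi(u)*^{h'}_\l\phi(v)$ and collapsing it to $\phi(u*_\l v)$ in four lines; you instead factor $\phi$ as $\iota_{\R_{h'}}^{-1}\circ\xi_{h'}\circ\iota_{\R}$ and reuse the displayed computation preceding the proposition, which already shows that $\xi_{h'}$ preserves the product of $\T\ltimes_\l^H\U$ on $Gr(\R)$ and maps it onto $Gr(\R_{h'})$. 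Your route buys conceptual clarity --- it makes explicit that the induced product on $\U$ is just the graph subalgebra transported along the second-coordinate projection, so no new computation is needed beyond what the text has already done --- at the cost of having to check the small bookkeeping facts that $\iota_{\R}$ and $\iota_{\R_{h'}}$ are algebra isomorphisms onto the graphs (which you do, correctly, via $\R(u)_\l\R(v)=\R(u*_\l v)$) and that the image point $(\R(u),u+h'(\R(u)))$ really lies on $Gr(\R_{h'})$. The paper's direct expansion is shorter but silently performs the same identification. Either argument is acceptable; your closing remark that one could alternatively substitute $\R_{h'}=\R\phi^{-1}$ and expand directly is precisely what the paper does.
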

\begin{proof}Let us suppose  isomorphism of the vector space $id + h'\circ \R : \U \to \U.$ This implies that:
	\begin{equation*}
	\begin{aligned}
	&(id + h'\circ \R )(u)*_\l^{h'} (id + h'\circ \R)(v)
	\\&= (\R (u))_\l(id + h'\circ \R )(v) + (id + h' \circ \R )(u)_\l(\R (v)) + H_\l(\R (u), \R (v))
	\\&=\R (u) _\l v + u _\l \R (v) + (\R (u)) _\l h'(\R (v)) + h'(\R (u)) _\l (\R (v)) + H_\l(\R (u), \R (v))
	\\&	= u *_\l v + h'((\R (u))_\l(\R (v)))
	\\&= u *_\l v + h'\R (u *_\l v) = (id + h'\circ \R )(u *_\l v).
	\end{aligned}
	\end{equation*}
	It concludes the proof. 
\end{proof}
\section{Cohomology of $H$-TRB operator}%In this section, we include a ternary map on the underlying graded vector space of the graded Lie algebra to make it an $L_\infty$-algebra. This ternary bracket is made by using a Hochschild $2$-cocycle $H$. The Maurer-Cartan elements of this new $L_\infty$-algebra are precisely $H$-TRB operators. This characterization allows us to define cohomology for an $H$-TRB operator $\R$. Finally, we show that the cohomology of $\R$ can be seen as the Hochschild cohomology of $(\U, *_\l)$ with coefficients in a suitable conformal bimodule structure on $\T$.
%%%%%%%%%%%%%%%%%%%%%%%%%%%%%%%%%%%%%%%%
In this section, we introduce a ternary map to transform the graded Lie algebra into a $L_\infty$-algebra from its underlying graded vector space. In order to create the corresponding ternary bracket, a Hochschild $2$-cocycle $H$ is used. The newly formed $L_{\infty}$-algebra's Maurer-Cartan elements perfectly match the conformal $H$-TRB operator. The conformal $H$-TRB operator $R$'s cohomology can be described more easily thanks to this characterization. Finally, we show that the Hochschild cohomology of $(\U, *_\l)$ with coefficients in a suitable conformal bimodule structure on $T$ can be understood as the cohomology of $R$.

\subsection{Cohomology and Maurer-Cartan Characterization :}Consider that $\T$ is an associative conformal algebra and $\U$ be a conformal $\T$-bimodule . Let $\bigoplus_{n\geq 0}Hom(\U^{\otimes n}, \T)$ be the graded vector space  with the bracket operation described as  
\begin{equation}\label{eq28}
\llbracket A,B \rrbracket= (-1)^p[[m_c + l + r, A]_G, B]_G,
\end{equation} where $A \in Hom(\U^{\otimes a}, \T)$ and $B \in Hom(\U^{\otimes b}, \T)$. According to \cite{4}, it has been exhibited that this structure constitutes a graded Lie algebra. 
  In this case, the expression $m_c + l + r$ represents a Maurer-Cartan element, and $[\cdot,\cdot]_G$ represents the Gerstenhaber bracket on the direct sum vector space $\T \oplus \U$. The previous reference provides a clear illustration of $m_c + l + r$ and the bracket $\llbracket \cdot,\cdot \rrbracket$. Note that, for a $\mathbb{C}[\p]$-linear map $ \R: \U \to \T[\l] $ the following equation holds:
\begin{align}\label{eq29}
\llbracket \R, \R \rrbracket (u,v) = 2(\R(\R(u)_\l v+ u_\l \R(v))+{\R(u)}_\l\R(v))
\end{align} Here, the graded Lie bracket $\llbracket \cdot,\cdot \rrbracket$ is similar to the classical Schouten-Nijenhuis ($NS$) bracket of multi-vector fields on a manifold $\mathbb{M}$. In \cite{5}, the authors construct a ternary bracket on the space of multi-vector fields given a closed $3$-form on $\U$. This ternary bracket, together with the $NS$ bracket, forms an $L_\infty$-algebra structure on the graded space of multivector fields. This $L_\infty$- algebra's Maurer-Cartan elements look like twisted Poisson structures. This structure's associative correspondence can be formally expressed as follows.  Let $H\in C^{2}_{Hoch}(\T, \U)$ be a Hochschild $2$-cocycle of $\T$ with coefficients in $\U$. Define a ternary bracket $\llbracket \cdot, \cdot, \cdot \rrbracket$ of degree $-1$ on the graded vector space $\bigoplus_{m\geq 0}Hom(\U^{\otimes m}, \T)$ by
\small{\begin{equation} \begin{aligned}\label{eq300}&\llbracket A,B,C \rrbracket(u_1,\cdots, u_{a+b+c-1})\\&= (-1)^{abc}\{\sum_{1\leq j\leq a}(-1)^{(j-1)b} A_{\l_1,\cdots,\l_{j-1},{\l_j+\cdots+\l_{j+b+c-1}},\l_{j+b+c},\cdots,\l_{a+b+c-2}}\\&(u_1, \cdots, u_{j-1}, H_{\l_j+\cdots+\l_{j+b+c-2}}(B_{\l_j,\cdots,\l_{j+b-2}}(u_{j}, \cdots, u_{j+b-1}), C_{\l_{j+b},\cdots,\l_{j+b+c-2}}(u_{j+ b}, \cdots, u_{j+b+c-1})),\\& u_{j+b+c}, \cdots, u_{a+b+c-1}) \\&-(-1)^{bc}\sum_{1\leq j\leq a}(-1)^{(j-1)c} A_{\l_1,\cdots,\l_{j-1},{\l_j+\cdots+\l_{j+c+b-1}},\l_{j+c+b},\cdots,\l_{a+b+c-2}}\\&(u_1, \cdots, u_{j-1}, H_{\l_j+\cdots+\l_{j+c+b-2}}(C_{\l_j,\cdots,\l_{j+c-2}}(u_{j}, \cdots, u_{j+c-1}), B_{\l_{j+c},\cdots,\l_{j+c+b-2}}(u_{j+c}, \cdots, u_{j+c+b-1})),\\& u_{j+c+b}, \cdots, u_{a+c+b-1})\\&-(-1)^{ab}\sum_{1\leq j\leq b}(-1)^{(j-1)a} B_{\l_1,\cdots,\l_{j-1},{\l_j+\cdots+\l_{j+a+c-1}},\l_{j+a+c},\cdots,\l_{a+b+c-2}}\\&(u_1, \cdots, u_{j-1}, H_{\l_j+\cdots+\l_{j+a}+\cdots+\l_{j+b+c-2}}(A_{\l_j,\cdots,\l_{j+a-2}}(u_{j}, \cdots, u_{j+a-1}), C_{\l_{j+a},\cdots, \l_{j+a+c-2}}(u_{j+a}, \cdots, u_{j+a+c-1})),\\& u_{j+a+c}, \cdots, u_{a+b+c-1})\\&+(-1)^{a(b+c)}\sum_{1\leq j\leq b}(-1)^{(j-1)c} B_{\l_1,\cdots,\l_{j-1},{\l_j+\cdots+\l_{j+c+a-1}},\l_{j+c+a},\cdots,\l_{c+a+b-2}}\\&(u_1, \cdots, u_{j-1}, H_{\l_j+\cdots+\l_{j+c}+\cdots+\l_{j+c+a-2}}(C_{\l_j,\cdots,\l_{j+c-2}}(u_{j}, \cdots, u_{j+c-1}), A_{\l_{j+c},\cdots,\l_{j+c+a-2}}(u_{j+c}, \cdots, u_{j+c+a-1})),\\& u_{j+c+a}, \cdots, u_{c+a+b-1})\\&-(-1)^{ab+bc+ca}\sum_{1\leq j\leq c}(-1)^{(j-1)b} C_{\l_1,\cdots,\l_{j-1},\l_{j}+\cdots+\l_{j+b+a-1},\l_{j+b+a},\cdots,\l_{a+b+c-2}}\\&(u_1, \cdots, u_{j-1}, H_{\l_{j}+\cdots+\l_{j+b}+\cdots+\l_{j+b+a-2}}(B_{\l_j,\cdots,\l_{j+b-2}}(u_{j}, \cdots, u_{j+b-1}), A_{\l_{j+b},\cdots,\l_{j+b+a-2}}(u_{j+b}, \cdots, u_{j+b+a-1})),\\& u_{j+a+b}, \cdots, u_{a+b+c-1})\\&+(-1)^{c(a+b)}\sum_{1\leq j\leq c}(-1)^{(j-1)a} C_{\l_1,\cdots,\l_{j-1},\l_{j}+\cdots+\l_{j+a+b-1},\l_{j+a+b},\cdots,\l_{a+b+c-2}}\\&(u_1, \cdots, u_{j-1}, H_{\l_{j}+\cdots+\l_{j+a}+\cdots+\l_{j+a+b-2}}(A_{\l_j,\cdots,\l_{j+a-2}}(u_{j}, \cdots, u_{j+a-1}), B_{\l_{j+a},\cdots,\l_{j+a+b-2}}(u_{j+a}, \cdots, u_{j+a+b-1})),\\& u_{j+b+a}, \cdots, u_{a+b+c-1})\}\end{aligned} \end{equation}\small}
    It is obvious to follow that the ternary bracket  has graded skew symmetry. Following on from \cite{5}, it can be shown that the binary bracket  and the ternary bracket are incompatible in the framework of an $L_{\infty}$-algebra, with the trivial higher brackets. Furthermore, we have the following theorem.
\begin{thm} Consider that $\U$ is a conformal $\T$-bimodule and $H$ be a Hochschild $2$-cocycle. A $\mathbb{C}[\p]$-linear map $\R : \U \to \T[\l]$ is called a conformal $H$-TRB iff $\R \in Hom(\U, \T)$ is a Maurer-Cartan element in the $L_{\infty}$-algebra $(\bigoplus_{m\geq0} Hom(\U^{\otimes m}, \T), \llbracket \cdot,\cdot \rrbracket, \llbracket \cdot,\cdot , \cdot\rrbracket)$.
\end{thm}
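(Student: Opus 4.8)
The plan is to translate the definition of a Maurer–Cartan element in an $L_\infty$-algebra with only two nontrivial brackets into the defining identity of a conformal $H$-TRB operator. Recall that for an $L_\infty$-algebra with a binary bracket $\llbracket\cdot,\cdot\rrbracket$ of degree $0$ (shifted) and a ternary bracket $\llbracket\cdot,\cdot,\cdot\rrbracket$ of degree $-1$, and all higher brackets trivial, an element $\R$ of degree $1$ (here $\R\in Hom(\U,\T)$, which sits in degree $1$ of the shifted grading on $\bigoplus_{m\ge0}Hom(\U^{\otimes m},\T)$) is a Maurer–Cartan element precisely when
\begin{equation*}
\tfrac12\llbracket \R,\R\rrbracket + \tfrac16\llbracket \R,\R,\R\rrbracket = 0 .
\end{equation*}
So the whole statement reduces to showing that this equation, applied to a pair of arguments $(u,v)\in\U^{\otimes 2}$, is equivalent to
\begin{equation*}
\R(u)_\l\R(v) = \R\bigl(u_\l\R(v)+\R(u)_\l v + H_\l(\R(u),\R(v))\bigr)\quad\text{for all }u,v\in\U .
\end{equation*}

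First I would record the input from the earlier part of the excerpt: equation \eqref{eq29} already gives $\llbracket\R,\R\rrbracket(u,v)=2\bigl(\R(\R(u)_\l v+u_\l\R(v))+\R(u)_\l\R(v)\bigr)$, up to the sign convention built into \eqref{eq28} (I would fix signs so that the ``associative'' term $\R(u)_\l\R(v)$ enters with the opposite sign to the $\R(\cdots)$ terms, matching the $\mathcal O$-operator/RB identity; this is a bookkeeping point, not a real obstruction). Next I would specialize the ternary bracket formula \eqref{eq300} to $A=B=C=\R$, so $a=b=c=1$. When all three entries have arity $1$, each of the six sums has a single term ($j=1$, with no surviving $u$-arguments inside $A$, $B$ or $C$ other than the one slot each consumes), the spectral-parameter bookkeeping collapses (there are no ``extra'' $\l_i$'s, only $\l_1$), and all the signs $(-1)^{abc}$, $(-1)^{bc}$, $(-1)^{ab}$, etc. become $\pm1$ with $a=b=c=1$. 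The six terms then pair up: the $H$-entry is always of the form $H_\l(\R(u),\R(v))$ or $H_\l(\R(v),\R(u))$, and the alternating signs are designed so that the three ``$B$-before-$C$''-type terms survive and the three ``$C$-before-$B$''-type terms survive, combining to give $6\,\R\bigl(H_\l(\R(u),\R(v))\bigr)$ after using graded skew-symmetry (equivalently, one may argue directly that $\llbracket\R,\R,\R\rrbracket(u,v)=6\,\R(H_\l(\R(u),\R(v)))$ by the totally symmetric nature of plugging $\R$ into all three slots). I would carry out this sign count explicitly for the six terms of \eqref{eq300} at $a=b=c=1$.

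Combining the two computations, the Maurer–Cartan equation $\tfrac12\llbracket\R,\R\rrbracket(u,v)+\tfrac16\llbracket\R,\R,\R\rrbracket(u,v)=0$ becomes
\begin{equation*}
\R(u)_\l\R(v) - \R\bigl(\R(u)_\l v + u_\l\R(v)\bigr) - \R\bigl(H_\l(\R(u),\R(v))\bigr) = 0 ,
\end{equation*}
which is exactly the conformal $H$-TRB identity from the Definition; since every step is an equivalence, this proves both implications at once. I would also remark that the $L_\infty$-structure itself (binary and ternary brackets compatible, higher brackets zero) is imported from the discussion preceding the theorem, following \cite{4} and \cite{5}, so the only thing to verify here is the identification of the Maurer–Cartan elements.

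The main obstacle I anticipate is purely combinatorial: keeping the six sign-laden terms of \eqref{eq300} straight when $a=b=c=1$, making sure the $\mathbb{C}[\p]$-linearity of $\R$ is used correctly when it absorbs $H_\l(\R(u),\R(v))$ (which a priori lives in $\U[\l]$, so $\R$ applied to it lands in $\T[\l]$ as needed), and confirming that the spectral parameters degenerate consistently — in particular that all the ``$\l_j+\cdots+\l_{j+\cdots}$'' compound indices reduce to the single variable $\l$ that appears in \eqref{eq29} and in the Definition. None of this is deep, but it is the step most prone to error, so I would present it carefully rather than leaving it to the reader.
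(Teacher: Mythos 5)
Your proposal takes essentially the same route as the paper's proof: evaluate $\llbracket \R,\R\rrbracket$ via \eqref{eq29}, specialize the ternary bracket \eqref{eq300} to $A=B=C=\R$ with $a=b=c=1$ so that the six sums collapse to $\pm 6\,\R(H_\l(\R(u),\R(v)))$, and read off the conformal $H$-TRB identity from the Maurer--Cartan equation. The only discrepancy is a global sign convention: with the prefactor $(-1)^{abc}$ in \eqref{eq300} one gets $\llbracket\R,\R,\R\rrbracket(u,v)=-6\,\R(H_\l(\R(u),\R(v)))$ rather than your $+6$, which is exactly compensated because the paper writes the Maurer--Cartan equation with $-\tfrac16\llbracket\R,\R,\R\rrbracket$ where you use $+\tfrac16$, so both computations land on the same $H$-TRB identity.
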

\begin{proof} For a $\mathbb{C}[\p]$-linear map $\R : \U \to \T[\l]$, we have from Eq. (\ref{eq30}) that \begin{equation}\label{eq31}\llbracket \R,\R,\R \rrbracket(u_1, u_2) = -6\R (H_{\l_1}(\R (u_1), \R (u_2))),\textit{ for }u_1, u_2\in \U. \end{equation}Thus, from Eqs. (\ref{eq29}) and (\ref{eq31}), we get
\begin{equation*}
\begin{aligned}
\frac{1}{2}\llbracket \R,\R \rrbracket-\frac{1}{6}\llbracket \R,\R,\R \rrbracket& =\R(\R (u_1)_\l u_2+ {u_1}_\l \R (u_2)+ H_\l(\R (u_1), \R(u_{2})))- \R (u_1)_\l\R(u_2)
\end{aligned}
\end{equation*}
Hence, an element $\R \in Hom(\U, \T)$ is a Maurer-Cartan element iff it is a conformal $H$-TRB operator.
\end{proof}Now consider that  $\R$ is a conformal $H$-TRB operator. It is inferred from the above proposition that $\R$ generates a differential $d_\R : Hom(\U^{\otimes m}, \T) \to Hom(\U^{\otimes {m+1}}, \T),\textit{ for }m\geq 0$. This differential can be described as\begin{equation*}
d_\R (g) = \llbracket \R, g \rrbracket- \frac{1}{2}\llbracket \R,\R , g\rrbracket,\textit{ for } g \in Hom(\U^{\otimes m}, \T).
\end{equation*}
 From the definition of the brackets $ \llbracket \cdot,\cdot\rrbracket$ and $ \llbracket \cdot,\cdot , \cdot\rrbracket$, we can deduce that a linear map $g \in Hom(\U, \T)$ satisfies the following condition\\\begin{center}
	 $d_\R (g) =0$
\end{center}equivalently, 
\begin{equation}\label{eq334}
\begin{aligned}
\R(g(u_1)_\l u_2& + {u_1 }_\l g(u_2)) + g(\R (u_1) _\l u_2 + {u_1} _\l \R (u_2)) -\R (u_1)_\l g(u_2) \\&-g(u_1)_\l \R (u_2)+  \R(H_\l(g u_1, \R (u_2))+H_\l(\R (u_1), g u_2))+g(H_\l(\R (u_1), \R (u_2)))=0.
\end{aligned}
\end{equation}For each $n \geq 0,$ we can define
 the space of $m$-cocycles and $m$-coboundaries by $$Z^m_\R(\U, \T) = \{g \in Hom(\U^{\otimes m},\T)| d_\R (g) = 0\} $$ and $$B^m_\R(\U, \T) = \{d_\R (h)| h \in Hom(\U^{\otimes m-1},\T)\} $$ respectively. The cohomology groups corresponding to conformal $H$-TRB operator $\R$ is denoted by  $H^{m}_{\R}(\U, \T)$. note that, when given  $L_\infty$-algebra and a Maurer-Cartan element, a new $L_\infty$-algebra can be formed by using the Maurer-Cartan element see \cite{6,7}. Within the current scenario, it can be restated as follows.
 
\begin{thm}
  Let $\R : \U \to \T[\l]$ be a conformal $H$-TRB operator on $\T$. Then the graded vector space $\bigoplus_{m\geq 0}\mathrm{Hom}(\U^{\otimes m}, \T)$ provides a new $L_\infty$ algebra, known as the twisted $L_\infty$ algebra, with the following structure maps:
$$l_1(A) = d_\R (A),$$
$$ l_2(A,B) = \llbracket A,B \rrbracket- \llbracket A,B,C \rrbracket,$$
$$ l_3(A,B,C) = \llbracket A,B,C \rrbracket, $$
where $\llbracket \cdot,\cdot,\cdot \rrbracket$ denotes the Schouten bracket and $d_\R$ represents the differential derived from $\R$.
Additionally, for any linear map $\R: \U \to \T[\l]$, the sum $\R + \R'$ is a conformal $H$-twisted Rota-Baxter operator iff $\R'$ is a Maurer-Cartan element in the (new) twisted $L_\infty$ algebra.
\end{thm}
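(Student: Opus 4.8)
The plan is to establish both assertions of the theorem as instances of the general machinery of twisting an $L_\infty$-algebra by a Maurer-Cartan element, adapted to the explicit binary and ternary brackets defined in Eqs.~(\ref{eq28}) and (\ref{eq300}). First I would recall the abstract fact (from \cite{6,7}) that if $(\mathfrak{g}, \{l_k\})$ is an $L_\infty$-algebra and $\theta$ is a Maurer-Cartan element, then the twisted structure maps
\begin{equation*}
l_k^\theta(x_1,\dots,x_k) = \sum_{n\geq 0}\frac{1}{n!}\,l_{k+n}(\underbrace{\theta,\dots,\theta}_{n},x_1,\dots,x_k)
\end{equation*}
again define an $L_\infty$-algebra. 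In our situation the only nonzero brackets are $l_2 = \llbracket\cdot,\cdot\rrbracket$ and $l_3 = \llbracket\cdot,\cdot,\cdot\rrbracket$ (with $l_1 = 0$ on the untwisted algebra), so this infinite sum truncates: $l_1^\R(A) = \llbracket\R,A\rrbracket - \tfrac12\llbracket\R,\R,A\rrbracket = d_\R(A)$, then $l_2^\R(A,B) = \llbracket A,B\rrbracket - \llbracket\R,A,B\rrbracket$, and $l_3^\R(A,B,C) = \llbracket A,B,C\rrbracket$, with all higher brackets vanishing. I would verify that these match the maps $l_1, l_2, l_3$ in the statement (noting that the appearance of $C$ in the displayed formula for $l_2(A,B)$ is a typo for $\R$, i.e. $l_2(A,B) = \llbracket A,B\rrbracket - \llbracket \R,A,B\rrbracket$), and that $d_\R^2 = 0$ follows because $\R$ is Maurer-Cartan — this is exactly the content of the cochain complex $(Hom(\U^{\otimes\bullet},\T), d_\R)$ already introduced.

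For the second assertion, the key computation is to expand the Maurer-Cartan equation for $\R'$ in the twisted $L_\infty$-algebra:
\begin{equation*}
l_1^\R(\R') + \frac{1}{2}l_2^\R(\R',\R') + \frac{1}{6}l_3^\R(\R',\R',\R') = 0,
\end{equation*}
and to show this is equivalent to $\tfrac12\llbracket \R+\R', \R+\R'\rrbracket - \tfrac16\llbracket \R+\R', \R+\R', \R+\R'\rrbracket = 0$, which by the first theorem of Section~3 (the Maurer-Cartan characterization) is exactly the statement that $\R + \R'$ is a conformal $H$-TRB operator. The mechanism here is bilinearity/trilinearity together with the graded symmetry of the brackets: expanding $\llbracket \R+\R',\R+\R'\rrbracket$ gives $\llbracket\R,\R\rrbracket + 2\llbracket\R,\R'\rrbracket + \llbracket\R',\R'\rrbracket$, expanding the ternary bracket gives the eight terms grouped by multiplicity of $\R'$, and then one uses that $\R$ itself satisfies the Maurer-Cartan equation to cancel the $\R'$-free part, leaving precisely $l_1^\R(\R') + \tfrac12 l_2^\R(\R',\R') + \tfrac16 l_3^\R(\R',\R',\R')$ after collecting the binary and ternary contributions into $d_\R$, $l_2^\R$, and $l_3^\R$.

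I expect the main obstacle to be purely bookkeeping: tracking the combinatorial coefficients and the Koszul signs $(-1)^{abc}$, $(-1)^{(j-1)b}$, etc., in Eq.~(\ref{eq300}) when symmetrizing, and making sure the conformal variables $\l_1,\dots$ are distributed consistently through the multilinear maps. The conceptual steps are routine consequences of the $L_\infty$-twisting formalism; what requires care is confirming that the truncation of the twisting sum is exact (which holds because $l_k = 0$ for $k \geq 4$ and $l_1 = 0$), that $d_\R$ as defined via $\llbracket\R,\cdot\rrbracket - \tfrac12\llbracket\R,\R,\cdot\rrbracket$ genuinely agrees with $l_1^\R$, and that no sign discrepancy arises from the degree conventions (elements of $Hom(\U^{\otimes a},\T)$ carrying degree $a-1$ or $a$, depending on convention). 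Once the sign conventions are pinned down, both parts follow by direct substitution.
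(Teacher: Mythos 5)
Your proposal is correct and follows essentially the same route as the paper: the first assertion is delegated to the general twisting construction of \cite{6,7}, and the second is the expansion of $\tfrac12\llbracket \R+\R',\R+\R'\rrbracket-\tfrac16\llbracket \R+\R',\R+\R',\R+\R'\rrbracket$ using that $\R$ itself is Maurer--Cartan, collecting the remaining terms into $l_1(\R')$, $l_2(\R',\R')$, $l_3(\R',\R',\R')$. Your reading of the displayed formula for $l_2(A,B)$ as $\llbracket A,B\rrbracket-\llbracket \R,A,B\rrbracket$ (the $C$ being a typo) matches how the paper's own computation uses $l_2(\R',\R')=\llbracket \R',\R'\rrbracket-\llbracket \R,\R',\R'\rrbracket$.
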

\begin{proof}
    The first step involves following the generic framework for constructing a twisted $L_\infty$-algebra, as detailed in \cite{6,7}. Moving on to the next part, we observe that
\begin{equation*}
\begin{aligned}
&\frac{1}{2}\llbracket \R+\R',\R+\R' \rrbracket -\frac{1}{6}\llbracket \R+\R',\R+\R',\R+\R' \rrbracket\\&=(\llbracket \R,\R' \rrbracket -\frac{1}{2}\llbracket \R,\R,\R' \rrbracket)+\frac{1}{2}(\llbracket \R',\R' \rrbracket-\llbracket \R,\R',\R' \rrbracket)-\frac{1}{6}\llbracket \R',\R',\R' \rrbracket\\&=l_1(\R')+\frac{1}{2}l_2(\R',\R')-\frac{1}{6}l_3(\R',\R',\R')\end{aligned}
\end{equation*}This indicates that the term $\R + \R'$
 serves as a conformal $H$-TRB operator, since it vanishes on the left-hand side. Likewise, $\R'$ can be determined as a Maurer-Cartan element within the twisted $L_\infty$-algebra. It completes the proof.
\end{proof}
\subsection{Hochschild cohomology of conformal $H$-TRB operator }
Consider that $\R: \U \to \T[\l]$ is a conformal $H$-TRB operator. Then according to Proposition \ref{prop5.5}, $\U$ has an associative conformal algebra structure $(\U,*_\l)$, which has a multiplication given as follows:
\begin{equation*}
u *_\l v = u _\l \R (v) + \R (u) _\l v + H_\l(\R( u), \R (v)),\textit{ for }u, v \in \U.
\end{equation*}
\begin{prop}
The maps $\mathfrak{l}^{\R}_{\l} : \U \otimes \T \to \T[\l]$ and $\mathfrak{r}^{\R}_{\l} : \T \otimes \U \to \T[\l]$ given by $$l^{\R}_{\l} (u, p) = \R (u)_\l p -\R (u_\l p + H_\l(\R (u), p))$$ and $$ r^{\R}_{\l} (p, u) = p_\l\R (u) - \R (p_\l u + H_\l(p, \R (u))),$$
for $ u \in \U, p\in \T,$ define a conformal $(\U, *_\l)$-bimodule on $\T$.
\end{prop}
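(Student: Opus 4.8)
The plan is to verify directly that the two maps $\mathfrak{l}^{\R}_{\l}$ and $\mathfrak{r}^{\R}_{\l}$ satisfy the three defining bimodule axioms for the associative conformal algebra $(\U, *_\l)$, namely the analogues of identities (1)--(3) in Section~2: for $u,v\in\U$ and $p\in\T$,
\begin{align*}
(u *_{\l} v) *^{\R}_{\l+\m} p &= u *_{\l} (v *^{\R}_{\m} p),\\
(p \,{}_{\l}^{\R} u) *^{\R}_{\l+\m} v &= p \,{}_{\l}^{\R} (u *_{\m} v),\\
(p \,{}_{\l} q) {}^{\R}_{\l+\m} u &= p \,{}_{\l}^{\R} (q \,{}_{\m}^{\R} u),
\end{align*}
where I write $u \,{}_{\l}^{\R} p$ for $\mathfrak{l}^{\R}_{\l}(u,p)$ and $p \,{}_{\l}^{\R} u$ for $\mathfrak{r}^{\R}_{\l}(p,u)$. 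The only inputs available are: the associativity of $\T$, the conformal $\T$-bimodule axioms for $\U$, the Hochschild $2$-cocycle identity for $H$, and — crucially — the defining $H$-TRB identity $\R(u)_\l\R(v) = \R\big(u_\l\R(v) + \R(u)_\l v + H_\l(\R(u),\R(v))\big)$.

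First I would expand each side of the first axiom. The left side $(u*_\l v)*^{\R}_{\l+\m}p$ involves $\R$ applied to the element $u*_\l v$, so one gets a term $\R(u*_\l v)_{\l+\m}p$ together with a correction $-\R\big((u*_\l v)_{\l+\m}p + H_{\l+\m}(\R(u*_\l v),p)\big)$; then one substitutes the definition of $u*_\l v$ and uses the $H$-TRB identity to rewrite $\R(u)_\l\R(v)$ when it appears. I expect a proliferation of terms: products of $\R$-values with $p$, terms with a single $H$, and terms with two nested $H$'s. The strategy is to collect them according to this "number of $H$-factors" grading and match the two sides within each graded piece. The $H$-free terms should cancel using associativity of $\T$ and the bimodule axioms for $\U$ over $\T$ (these are essentially the computations already done in \cite{20} showing $(\U,*_\l)$ is associative, now with one slot occupied by $p\in\T$). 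The single-$H$ terms will require the bimodule axioms (1)--(3) for $\U$ over $\T$ applied to arguments of the form $H_\l(\R(u),\R(v))$, $\R(u)_\l p$, etc. The double-$H$ terms, and the terms where $H$ is evaluated on an $H$-expression, are where the Hochschild $2$-cocycle identity for $H$ must be invoked — this is the step I expect to be the main obstacle, since one must carefully track the $\l$-arguments (the $\l+\m$ shifts coming from conformal sesqui-linearity) to see that the $2$-cocycle relation applies with exactly the right spectral parameters.

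The second axiom is handled the same way, now expanding $p\,{}_{\l}^{\R}u = p_\l\R(u) - \R(p_\l u + H_\l(p,\R(u)))$ on the left inside another $\R$; again the $H$-TRB identity converts $p_\l\R(u)$-type products into $\R$ of something when they collide with another $\R$-value, and the cocycle identity absorbs the double-$H$ terms. The third axiom is the simplest: it only involves $\R$ once (applied to $u$), so after expanding $q\,{}_{\m}^{\R}u$ and $p\,{}_{\l}^{\R}(q\,{}_{\m}^{\R}u)$ one needs associativity of $\T$ for the $p_\l q_\m \R(u)$ term, the conformal $\T$-bimodule axiom $(p_\l q)_{\l+\m}u = p_\l(q_\m u)$ for the $\R$-argument, and the Hochschild cocycle identity for $H$ applied to $(p,q,\R(u))$ — indeed that cocycle identity reads precisely $p_\l H_\m(q,\R(u)) - H_{\l+\m}(p_\l q,\R(u)) + H_\l(p, q_\m\R(u)) - H_\l(p,q)_{\l+\m}\R(u) = 0$, which is exactly the combination that appears. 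A clean way to organize the whole proof — and the way I would actually present it — is to observe that $\T\oplus\U$ carries the conformal $H$-twisted semi-direct product structure $\T\ltimes^{H}_\l\U$ (Remark after the $2$-cocycle discussion), that $Gr(\R)$ is a subalgebra isomorphic to $(\U,*_\l)$ via $u\mapsto(\R(u),u)$, and that $(\T\oplus\U,\,\cdot^{H}_\l)$ is itself a conformal bimodule over $\T\oplus\U$ by left/right multiplication; restricting that bimodule action along the inclusion $Gr(\R)\hookrightarrow\T\ltimes^H_\l\U$ and projecting onto the $\T$-component yields exactly the formulas for $\mathfrak{l}^{\R}_\l$ and $\mathfrak{r}^{\R}_\l$ after a short computation, and the bimodule axioms then follow formally from those for the semi-direct product. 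I would still include the direct verification of at least the third axiom to make the spectral-parameter bookkeeping explicit, since that is the only genuinely delicate point.
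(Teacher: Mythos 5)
Your main strategy coincides with the paper's: expand both sides of each bimodule identity, cancel in pairs using associativity of $\T$, the $\T$-bimodule axioms for $\U$ and the $H$-TRB identity, and observe that the four surviving terms assemble into $\R$ applied to a Hochschild coboundary of $H$, which vanishes since $H$ is a $2$-cocycle. The quotient argument you sketch at the end is a genuinely different and cleaner route that the paper does not take: since $Gr(\R)\cong(\U,*_\l)$ is a subalgebra of $\T\ltimes_\l^H\U$, the regular bimodule of $\T\ltimes_\l^H\U$ restricts to a $Gr(\R)$-bimodule in which $Gr(\R)$ is a subbimodule, and the quotient is identified with $\T$. One caveat: the identification must be the projection with kernel $Gr(\R)$, i.e.\ $(p,v)\mapsto p-\R(v)$, not the naive coordinate projection $(p,v)\mapsto p$; only the former produces the correction terms $-\R(u_\l p+H_\l(\R(u),p))$ in $\mathfrak{l}^{\R}_{\l}$.

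There is, however, a concrete error in your list of identities to verify. A conformal $(\U,*_\l)$-bimodule structure on $\T$ requires three compatibilities, each involving \emph{two} elements of $\U$ and \emph{one} of $\T$, namely
\begin{align*}
\mathfrak{l}^{\R}_{\l}(u *_\l v,\, p) &= \mathfrak{l}^{\R}_{\l}(u,\, \mathfrak{l}^{\R}_{\m}(v, p)),\\
\mathfrak{r}^{\R}_{\m}(\mathfrak{l}^{\R}_{\l}(u, p),\, v) &= \mathfrak{l}^{\R}_{\l}(u,\, \mathfrak{r}^{\R}_{\m}(p, v)),\\
\mathfrak{r}^{\R}_{\m}(\mathfrak{r}^{\R}_{\l}(p, u),\, v) &= \mathfrak{r}^{\R}_{\l}(p,\, u *_\m v).
\end{align*}
Your first two displayed identities are (after relabelling) the first and third of these, but your third, $(p_\l q)^{\R}_{\l+\m}u = p^{\R}_{\l}(q^{\R}_{\m}u)$ with $p,q\in\T$, is not a $(\U,*_\l)$-bimodule axiom at all: it is ill-typed, since $\mathfrak{r}^{\R}_{\m}(q,u)$ lands in $\T[\m]$ and cannot occupy the $\U$-slot of $\mathfrak{r}^{\R}_{\l}$. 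Meanwhile the genuine mixed axiom (the second line above) is missing from your list. This invalidates your remark that ``the third axiom is the simplest: it only involves $\R$ once'' and reduces to the cocycle identity on $(p,q,\R(u))$: in all three genuine axioms $\R$ is applied to both $\U$-arguments, and the residual terms after cancellation are $\R(\delta_{Hoch}H_{\l,\m}(\R(u),\R(v),p))$, $\R(\delta_{Hoch}H_{\l,\m}(\R(u),p,\R(v)))$ and $\R(\delta_{Hoch}H_{\l,\m}(p,\R(u),\R(v)))$ respectively, so the $H$-TRB identity and the full cancellation scheme are needed in every case. The omitted axiom is handled by exactly the method you describe (and comes for free from the quotient argument), so the repair is immediate, but as written your verification proves one identity that is not required and omits one that is.
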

\begin{proof}Consider that for any $p \in \T$ and $u, v\in \U$, we get
\begin{equation*}
\begin{aligned}
& \mathfrak{l}^{\R}_{\l}  (u *_\l v, p) - \mathfrak{l}^{\R}_{\l}  (u, \mathfrak{l}^{\R}_{\m} (v, p))\\&=\R (u *_\l v)_{\l+\m}p - \R ((u *_\l v) _{\l+\m} p) - \R H_{\l+\m}(\R (u *_\l v), p)\\& - \mathfrak{l}^{\R}_{\l}  (u, \R (v)_\m p - \R (v _\m a) - \R H_\m(\R (v), p))\\&=(\R(u)_\l\R(v))_{\l+\m}p - \R ((u _\l \R(v) +\R(u)_\l v+ H_\l(\R (u), \R (v))) _{\l+\m} p)\\& - \R H_{\l+\m}(\R (u _\l \R(v) +\R(u)_\l v+ H_\l(\R (u), \R (v))), p)\\& - \mathfrak{l}^{\R}_{\l} (u, \R (v)_\m p) + \mathfrak{l}^{\R}_{\l}(u, \R (v _\m p)) +\mathfrak{l}^{\R}_{\l}(u, \R H_\m(\R (v), p))\\&=(\R(u)_\l\R(v))_{\l+\m}p - \R ((u _\l \R(v)) _{\l+\m} p) -\R ((\R(u)_\l v) _{\l+\m} p)- \R((H_\l(\R (u), \R (v))) _{\l+\m} p)\\& - \R H_{\l+\m}(\R (u _\l \R(v)), p) - \R H_{\l+\m}(\R (\R(u)_\l v), p)- \R H_{\l+\m}(\R (H_\l(\R (u), \R (v))),p)\\& - \R(u)_\l(\R(v)_\m p)+\R(u_\l(\R(v)_\m p)+H_{\l}(\R(u),\R(v)_\m p))\\& + \R(u)_\l\R(v_\m p)-\R(u_\l(\R(v_\m p)))-\R H_{\l}(\R(u),\R(v_\m p))\\& + \R(u)_\l\R H_\m(\R (v), p)-\R(u_\l(\R H_\m(\R (v), p)))-\R H_{\l}(\R(u),\R H_\m(\R (v), p))\\&=\cancel{(\R(u)_\l\R(v))_{\l+\m}p} -\cancel{\R ((u _\l \R(v)) _{\l+\m} p)} -\cancel{\R ((\R(u)_\l v) _{\l+\m} p)}- \R((H_\l(\R (u), \R (v))) _{\l+\m} p)\\& - \R H_{\l+\m}(\R (u _\l \R(v)), p) - \R H_{\l+\m}(\R (\R(u)_\l v), p)- \R H_{\l+\m}(\R (H_\l(\R (u), \R (v))), p)\\& - \cancel{\R(u)_\l(\R(v)_\m p)}+\cancel{\R(u_\l(\R(v)_\m p))}+\R (H_{\l}(\R(u),\R(v)_\m p))\\& + \cancel{\R(u_\l \R(v_\m p))}+\cancel{\R(\R(u)_\l(v_\m p))}+\cancel{\R(H_{\l}(\R (u), \R(v_\m p)))}\\&-\cancel{\R(u_\l(\R(v_\m p)))}-\cancel{\R H_{\l}(\R(u),\R(v_\m p))}\\& + \R(\R(u)_\l H_\m(\R (v), p))+\cancel{\R(u_\l \R H_\m(\R (v), p))}+\cancel{\R(H_{\l}(\R(u), \R H_\m(\R (v), p)))}\\&-\cancel{\R(u_\l(\R H_\m(\R (v), p)))}-\cancel{\R H_{\l}(\R(u),\R H_\m(\R (v), p))}\\&=-\R H_{\l+\m}(\R (u )_\l \R(v), p)-\R((H_\l(\R (u), \R (v))) _{\l+\m} p)+\R (H_{\l}(\R(u),\R (v)_\m p))+\R(\R(u)_\l H_\m(\R (v), p))
\end{aligned} \end{equation*}Above equation is obtained by the cancellation of pair of terms, i.e, $\{1,8\},\{2,9\},\{3,12\},\{11,14\},\{13,15\},\{17,19\}$. Thus, we observe that the remaining terms are equal to \begin{equation*}
\begin{aligned}
\R(\delta_{Hoch}H_{\l,\m}(\R (u), \R (v), p))= 0.
\end{aligned}
\end{equation*} Similarly, we observe that
\begin{equation*}
\begin{aligned}
&\mathfrak{r}^{\R}_{\l} (\mathfrak{l}^{\R}_{\l}(u, p), v) - \mathfrak{l}^{\R}_{\l}(u, \mathfrak{r}^{\R}_{\m} (p, v))\\
&=\mathfrak{r}^{\R}_{\m} (\R (u)_\l p -\R (u_\l p) -\R H_\l(\R (u), p), v) - \mathfrak{l}^{\R}_{\l}(u, p_\m\R (v) -\R(p_\m v) -\R H_\m(p, \R (v)))\\
&= (\R(u)_\l p)_{\l+\m}\R (v) - \R (u _\l p)_{\l+\m} \R(v) - \R H_\l(\R (u), p)_{\l+\m}\R (v) -\R ((\R (u)_\l p)_{\l+\m} v) \\
&+ \R (\R (u _\l p)_{\l+\m} v)+ \R (\R H_{\l}(\R (u), p)_{\l+\m} v) - \R H_{\l+\m}(\R (u)_\l p, \R (v)) + \R H_{\l+\m}(\R (u _\l p), \R (v)) \\
&+ \R H_{\l+\m}(\R H_\l(\R (u), p), \R (v))- \R (u)_\l (p_\m \R (v)) + \R(u)_\l \R (p _\m v) + \R (u)_\l \R H_{\m}(p, \R (v)) \\
&+ \R (u _\l (p_\m \R (v))) - \R (u _\l \R (p _\m v))- \R (u _\l \R H_\m(p, \R (v)))\\
&+ \R H_{\l}(\R (u), p_\m\R (v)) - \R H_{\l}(\R (u), \R (p _\m v)) - \R H_{\l}(\R( u), \R H_\m(p, \R (v)))\\
&=-\R ((u _\l p) _{\l+\m} \R (v)) - \R (\R (u _\l p) _{\l+\m} v) - \R H_{\l+\m}(\R (u_\l p), \R (v)) - \R (H_{\l}(\R (u), p) _{\l+\m} \R (v))\\
&- \R (\R H_{\l+\m}(\R (u), p)_{\l+\m} v)-\R H_{\l+\m}(\R H_{\l}(\R (u), p), \R (v)) - \R (((\R (u))_\l p) _{\l+\m} v) + \R (\R (u_\l p)_{\l+\m}v) \\&+ \R (\R H_\l(\R (u), p)_{\l+\m} v)-\R H_{\l+\m}(\R (u)_\l p, \R (v))+ \R H_{\l+\m}(\R (u _\l p), \R (v)) + \R H_{\l+\m}(\R H_\l(\R (u), p), \R (v)) \\
& + \R (u_\l \R (p_\m v)) + \R (\R (u)_\l (p _\m v)) + \R H_{\l}(\R (u), \R(p _\m v))+ \R (u _\l \R H_\m(p, \R (v)))+ \R (\R (u) _\l H_\m(p, \R (v))) \\&+ \R H_{\l}(\R (u), \R H_\m(p, \R (v))) + \R (u _\l (p_\m\R (v))) -\R (u_\l \R (p_\m v))- \R (u _\l \R H_\m(p, \R (v))) \\&+ \R H_{\l}(\R (u), p_\m\R (v)) -\R H_{\l}(\R (u), \R (p _\m v)) -\R H_{\l}(\R (u), \R H_\m(p, \R (v)))
\\&\overset{cancellation}{=}-\R(H_\l(\R (u), p)_{\l+\m}\R (v)) - \R H_{\l+\m}(\R (u)_\l p, \R (v)) + \R (\R (u) _\l H_\m(p, \R (v))) + \R H_{\l}(\R (u), p_\m\R (v))\\&
= \R (\delta_{Hoch}H_{\l,\m}(\R (u), p, \R (v))) =0 
\end{aligned}
\end{equation*}
and 
\begin{equation*}
\begin{aligned}
&\mathfrak{r}^{\R}_{\m} (\mathfrak{r}^{\R}_{\l} (p, u), v) - \mathfrak{r}^{\R}_{\l} (p, u *_\m v)
\\&= \mathfrak{r}^{\R}_{\m} (p_\l\R (u) -\R (p _\l u) - \R H_\l(p,\R (u)), v) - p_\l\R(u *_\m v) + \R (p _\l (u *_\m v)) + \R H_{\l}(p, \R (u *_\m v))
\\&= (p_\l\R (u))_{\l+\m}\R (v) -\R(p_\l u)_{\l+\m}\R (v) - \R H_\l(p, \R (u))_{\l+\m}\R (v)-\R ((p_\l\R (u)) _{\l+\m} v) + \R (\R (p _\l u) _{\l+\m} v) \\&+ \R (\R H_{\l}(p, \R( u))_{\l+\m} v)
-\R H_{\l+\m}(p_\l\R (u), \R (v)) + \R H_{\l+\m}(\R (p_\l u), \R (v)) + \R H_{\l+\m}(\R H_{\l}(p, \R (u)), \R (v)) \\&
-(p_\l\R(u))_{\l+\m}\R (v) + \R (p _\l (u_\m \R (v))) + \R (p _\l (\R(u) _\m v)) + \R (p _\l H_\m(\R (u), \R (v))) + \R H_{\l}(p, \R (u)_\m\R(v))\\&
= -\R ((p _\l u) _{\l+\m}\R (v)) -\R(\R (p _\l u) _{\l+\m} v) -\R H_{\l+\m}(\R (p _\l u), \R (v)) -\R (H_\l(p, \R (u)) _{\l+\m} \R (v))\\& - \R(\R H_{\l}(p, \R (u))_{\l+\m} v)
-\R H_{\l+\m}(\R H_{\l}(p, \R (u)),\R (v)) - \R ((p_{\l}\R (u)) _{\l+\m}v) + \R (\R (p _{\l} u) _{\l+\m} v)  \\& + \R (\R H_\l(p, \R (u))_{\l+\m} v)- \R H_{\l+\m}(p_{\l}\R (u), \R (v))+ \R H_{\l+\m}(\R (p _\l u), \R (v)) + \R H_{\l+\m}(\R H_\l(p, \R (u)), \R (v))  \\&+ \R (p _\l(u _\m \R (v)))+\R (p _\l (\R (u)_\m v))+\R(p_\l H_\m (\R (u),\R (v)))+\R H_\l(p, \R (u)_\m\R (v))\\&
\overset{cancellation}{=}- \R (H_\l(p, \R (u))_{\l+\m}\R (v))  - \R H_{\l+\m}(p_\l\R (u), \R (v)) + \R (p _\l H_\m(\R (u), \R (v))) + \R H_{\l}(p,\R(u)_\m\R (v))\\&
= \R (\delta_{Hoch}H_{\l,\m}(p,\R (u), \R (v))) = 0.\end{aligned}
\end{equation*}This shows that $\mathfrak{l}^{\R}_{\l}$, $\mathfrak{r}^{\R}_{\l}$ defines a conformal $(\U, *_\l)$-bimodule structure on $\T$.
\end{proof}
The aforementioned result shows that it is possible to compute the Hochschild cohomology of the associative conformal algebra $(\U, *_\l)$ with coefficients in the conformal $\U$-bimodule $\T$. This involves studying the cochain complex $\{C_{Hoch}^{\circledcirc}(\mathcal{U}, \mathcal{T}), \delta_{Hoch}\}$, where $C^m_{Hoch}(\mathcal{U}, \mathcal{T})$ denotes the space of $m$-cochains that are linear maps from $\mathcal{U}^{\otimes m}$ to $\T$, and the coboundary map $\delta_{Hoch}: C^m_{Hoch}(\U, \T) \to C^{m+1}_{Hoch}(\U, \T)$ is provided by:
\begin{eqnarray}\label{eq40}
\begin{aligned}
&(\delta_{Hoch}(g))(u_1,\cdots, u_{m+1})\\&=\R (u_1)_{\l_1}g_{\l_2,\cdots,\l_m}(u_2,\cdots, u_{m+1})- \R ({u_1}_{\l_1} g_{\l_2,\cdots, \l_m}(u_2, \cdots , u_{m+1})) \\&- \R H_{\l_1+\cdots+\l_m}(\R (u_1), g_{\l_2,..\l_m}(u_2, \cdots , u_{m+1})) +
\sum_{j=1}^{m} (-1)^j g_{\l_1,...,\l_{j-1},\l_j+\l_{j+1},...,\l_m}(u_{1}, \cdots, u_{j-1},{u_j}_{\l_j}\R(u_{j+1}) \\&+ \R (u_j)_{\l_j }u_{j+1} + H_{\l_j}(\R (u_{j}), \R (u_{j+1})), u_{j+1},\cdots, u_{m+1})\\&
+(-1)^{m+1}g_{\l_1,\cdots,\l_{m-1}}(u_1,\cdots, u_m)_{\l_1+\cdots+\l_{m}}\R (u_{m+1})-(-1)^{m+1}\R (g_{\l_1,\cdots,\l_{m-1}}(u_1, \cdots, u_m) _{\l_1+\cdots+\l_m} u_{m+1})\\& - (-1)^{m+1}\R H_{\l_1+\cdots+\l_m}(g_{\l_1,\cdots,\l_{m-1}}(u_1, \cdots , u_m), \R (u_{m+1}))
\end{aligned}\end{eqnarray} for $g \in C^{m}_{Hoch}(\U, \T)$. The Hochschild cohomology group correspondng to the cochain complex $\{C_{Hoch}^{\circledcirc}(\U, \T), \delta_{Hoch}\}$ is denoted by $H^{\circledcirc}_{Hoch}(\U, \T)$. Thus, we get
\begin{equation*}
    H^0_{Hoch}(\U, \T) = \{p \in \T| p_\l \R (u) -\R (u)_\l p + \R H_\l(\R (u), p) - \R H_\l(p, \R (u)) = \R (p _\l u - u_{\l} p), ~~\forall~~~ u\in \U.
\end{equation*}
It follows from Eq. (\ref{eq40}) that a $\mathbb{C}[\p]$-linear map $g : \U \to \T[\l]$ is a Hochschild $1$-cocycle iff $g$ fulfills
\begin{equation*}
\begin{aligned}
\R(u)_\l g(v) + g(u)_\l\R (v) &= \R (u _\l g(v) + H_\l(\R (u), g(v))+g(u)_\l v + H_\l(g(u), \R (v)))
\\&+ g(u _\l \R (v) + \R (u) _\l v + H_\l(\R (u), \R (v))), 
\end{aligned}
\end{equation*}for all $u, v \in \U.$The cocycle condition has the same meaning as the cocycle condition described in Eq. (ref. eq334). In fact, we can additionally generalize this result as follows.
\begin{prop} Let $\R : \U \to \T[\l]$ be a conformal $H$-TRB operator. Then we have
$$d_{\R} (g) = (-1)^m \delta_{Hoch}(g)$$ for any $g \in Hom(\U^{\otimes m}, \T).$ 
\end{prop}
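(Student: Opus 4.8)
The plan is to prove the identity by unwinding both sides on an arbitrary tuple $(u_1,\dots,u_{m+1})$ and matching the resulting expressions term by term. By definition $d_{\R}(g)=\llbracket \R,g\rrbracket-\tfrac12\llbracket\R,\R,g\rrbracket$, while $\delta_{Hoch}(g)$ is the Hochschild differential (\ref{eq40}) of the associative conformal algebra $(\U,*_\l)$ with coefficients in the conformal $(\U,*_\l)$-bimodule structure $\mathfrak{l}^{\R}_\l,\mathfrak{r}^{\R}_\l$ on $\T$ constructed in the previous proposition. So it suffices to show that $\llbracket\R,g\rrbracket$ reproduces the $H$-free part of $(-1)^m\delta_{Hoch}(g)$ and that $-\tfrac12\llbracket\R,\R,g\rrbracket$ reproduces the $H$-dependent part of $(-1)^m\delta_{Hoch}(g)$.

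First I would compute $\llbracket\R,g\rrbracket$. Using the explicit description of the graded Lie bracket from \cite{4} (equivalently, expanding (\ref{eq28}) with the Maurer--Cartan element $m_c+l+r$ and $\R$ of arity one), this bracket produces exactly the $H$-free terms appearing in (\ref{eq40}): the left-action term $\R(u_1)_{\l_1}g_{\l_2,\dots,\l_m}(u_2,\dots,u_{m+1})-\R\big(u_1{}_{\l_1}g_{\l_2,\dots,\l_m}(u_2,\dots,u_{m+1})\big)$, the internal terms $\sum_{j=1}^{m}(-1)^{j}g_{\l_1,\dots,\l_j+\l_{j+1},\dots,\l_m}\big(\dots,u_j{}_{\l_j}\R(u_{j+1})+\R(u_j){}_{\l_j}u_{j+1},\dots\big)$, and the right-action term $(-1)^{m+1}\big(g_{\l_1,\dots,\l_{m-1}}(u_1,\dots,u_m){}_{\l_1+\cdots+\l_m}\R(u_{m+1})-\R(g_{\l_1,\dots,\l_{m-1}}(u_1,\dots,u_m){}_{\l_1+\cdots+\l_m}u_{m+1})\big)$, all carried by the global sign $(-1)^{m}$ coming from the degree shift in the graded Lie algebra of \cite{4}. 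For $m=1$ this is precisely the $H$-free part of (\ref{eq334}).

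Next I would compute $\tfrac12\llbracket\R,\R,g\rrbracket$ from the defining formula (\ref{eq300}) of the ternary bracket by setting $A=B=\R$ (both of arity $a=b=1$) and $C=g$ (of arity $c=m$). Because $\R$ is unary, each of the double sums in (\ref{eq300}) whose outer index ranges over an $\R$-slot collapses to the single term $j=1$ with no surviving ``outer'' arguments around the inserted $H$-factor, while the two double sums whose outer index ranges over the $g$-slot remain as sums over $j=1,\dots,m$; the two equal entries $\R$ account for the overall factor $\tfrac12$. Collecting the pieces, $\tfrac12\llbracket\R,\R,g\rrbracket$ equals $(-1)^{m+1}$ times the $H$-dependent part of $\delta_{Hoch}(g)$, i.e. (up to the global $(-1)^m$ and signs) it contributes the terms $\R H_{\l_1+\cdots+\l_m}(\R(u_1),g_{\l_2,\dots,\l_m}(u_2,\dots,u_{m+1}))$, $\R H_{\l_1+\cdots+\l_m}(g_{\l_1,\dots,\l_{m-1}}(u_1,\dots,u_m),\R(u_{m+1}))$ and $\sum_{j=1}^{m}(-1)^{j}g_{\l_1,\dots,\l_j+\l_{j+1},\dots,\l_m}(\dots,H_{\l_j}(\R(u_j),\R(u_{j+1})),\dots)$, matching the three families of $H$-terms in (\ref{eq40}). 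Adding the two computations and comparing with (\ref{eq40}) then gives $d_{\R}(g)=(-1)^{m}\delta_{Hoch}(g)$.

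The main obstacle is the sign and index bookkeeping in the ternary computation: one must track the Koszul signs $(-1)^{abc}$, $(-1)^{bc}$, $(-1)^{(j-1)c}$, $(-1)^{a(b+c)}$, $(-1)^{c(a+b)}$ etc. from (\ref{eq300}), reconcile them with the degree-shift sign of the graded Lie algebra of \cite{4}, and keep each $\l$-parameter attached to the correct slot when two entries are fused into an argument of $H$; once the unary specialization is made, the remaining manipulations are routine. Two consistency checks are available: the case $m=1$ is already contained in (\ref{eq334}) and in the explicit $1$-cocycle identity stated just before the proposition, and more structurally, since $d_{\R}^2=0$ (from the twisted $L_\infty$-structure) and $\delta_{Hoch}^2=0$, the identification of differentials is forced to be compatible.
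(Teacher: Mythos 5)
Your proposal is correct and follows essentially the same route as the paper: the paper's proof likewise expands $\llbracket \R,g\rrbracket$ to recover the $H$-free terms of $(-1)^m\delta_{Hoch}(g)$ and specializes the ternary bracket (\ref{eq300}) with two unary entries equal to $\R$ to show $\llbracket\R,\R,g\rrbracket=-2(-1)^m$ times the three families of $H$-terms, then combines them via $d_\R(g)=\llbracket\R,g\rrbracket-\tfrac12\llbracket\R,\R,g\rrbracket$. Your added consistency checks (the $m=1$ case and compatibility of the squared differentials) are sensible but not needed beyond the direct computation.
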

\begin{proof}The detailed illustration of the bracket in Eq. (\ref{eq28}) results into the following expression
\begin{equation*}
\begin{aligned}
&\llbracket\R,g\rrbracket \\&= (-1)^m \R (u_{1})_{\l_1}g_{\l_2,...\l_m}(u_2, \cdots, u_{m+1})-\R ({u_1}_{\l_1}g_{\l_2,...\l_m}(u_2, \cdots , u_{m+1})) \\&+
\sum_{i=1}^{m} (-1)^j g_{\l_1,...\l_m}(u_1,\cdots, u_{j-1}, {u_j}_{\l_j} \R (u_{j+1}) + \R(u_j) _{\l_j} u_{j+1}, u_{j+1}, \cdots, u_{m+1})\\&+ (-1)^{m+1}g_{\l_1,...\l_{m-1}}(u_{1}, \cdots, u_m)_{\l_1+...+\l_m}\R(u_{m+1})- (-1)^{m+1}\R (g_{\l_1,...,\l_{m-1}}(u_1,\cdots, u_{m})_{\l_1+...+\l_m} u_{m+1}).
\end{aligned}\end{equation*}Additionally, from Eq. (\ref{eq300})  we have
\begin{equation*}
\begin{aligned}\llbracket\R,\R,g\rrbracket &= - 2(-1)^m (\R H_{\l_1+\cdots+\l_m}(\R (u_1), g_{\l_2,\cdots,\l_{m-1}}(u_2, \cdots , u_{m+1})) \\&+ (-1)^{m}\R H_{\l_1+\cdots+\l_m}(g_{\l_1,\cdots,\l_{m-1}}(u_1,\cdots , u_m), \R (u_{m+1}))\\&
+\sum_{ j=1}^{m} (-1)^{j}g_{\l_1,\cdots,\l_{j-1},\l_{j}+\l_{j+1},\cdots,\l_{m-1}}(u_1, \cdots, u_{j-1}, H_{\l_j}(\R (u_j), \R (u_{j+1})), u_{j+1}, \cdots , u_{m+1})).
\end{aligned}
\end{equation*}
Finally
\begin{align*}d_{\R}(g) = \llbracket \R , g \rrbracket - \frac{1}{2} \llbracket \R ,\R , g \rrbracket = (-1)^{m}\delta_{Hoch}(g).
\end{align*}\end{proof}As a result of the aforementioned proposition, we have the following theorem.
\begin{thm}
Consider an associative conformal algebra $(\U, *_\l)$ and a conformal $H$-TRB operator $\R: \U\to \T[\l]$ on it. In this case, the cohomology $H^\circledcirc_\R(\U, \T) \simeq H^{\circledcirc}_{Hoch}(\U, \T)$. Where $H^{\circledcirc}_{Hoch}(\U, \T)$ denotes the Hochschild cohomology of $(\U, *_\l)$ with coefficients in the conformal $\U$-bimodule $\T$.
\end{thm}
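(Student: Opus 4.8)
The plan is to show that the two cochain complexes $\big(\bigoplus_{m\ge 0}\mathrm{Hom}(\U^{\otimes m},\T),\,d_\R\big)$ and $\big(C^{\circledcirc}_{Hoch}(\U,\T),\,\delta_{Hoch}\big)$ coincide up to sign, and then invoke the fact that complexes differing by a sign on each differential have the same cohomology. The crucial ingredient is the preceding proposition, which states that $d_\R(g)=(-1)^m\delta_{Hoch}(g)$ for every $g\in\mathrm{Hom}(\U^{\otimes m},\T)$. Granting this, the map which is the identity in each degree $m$ intertwines $d_\R$ and $\delta_{Hoch}$ in the sense that it sends a $d_\R$-cocycle to a $\delta_{Hoch}$-cocycle and conversely (since $(-1)^m$ is a unit), and likewise for coboundaries; hence it induces an isomorphism $H^m_\R(\U,\T)\xrightarrow{\ \sim\ }H^m_{Hoch}(\U,\T)$ for each $m$, which is exactly the claimed $H^\circledcirc_\R(\U,\T)\simeq H^\circledcirc_{Hoch}(\U,\T)$.

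Concretely, I would proceed as follows. First, recall from the earlier Proposition that $(\U,*_\l)$ is an associative conformal algebra and from the Proposition just before the theorem that $\mathfrak{l}^\R_\l,\mathfrak{r}^\R_\l$ equip $\T$ with the structure of a conformal $(\U,*_\l)$-bimodule; this guarantees that the right-hand side $H^\circledcirc_{Hoch}(\U,\T)$ is well-defined, i.e. that $\delta_{Hoch}$ as written in Eq. (\ref{eq40}) genuinely squares to zero. Second, observe that the differential $d_\R$ built from the twisted $L_\infty$-structure, $d_\R(g)=\llbracket\R,g\rrbracket-\tfrac12\llbracket\R,\R,g\rrbracket$, satisfies $d_\R^2=0$ because $\R$ is a Maurer-Cartan element (this is the general statement that a Maurer-Cartan element of an $L_\infty$-algebra twists it into a new one with $l_1=d_\R$ a differential, already invoked in the excerpt). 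Third, apply the previous proposition degreewise to get the identity $d_\R=(-1)^m\delta_{Hoch}$ on $\mathrm{Hom}(\U^{\otimes m},\T)$. Fourth, conclude: for each $m$ we have $Z^m_\R(\U,\T)=\ker d_\R=\ker\delta_{Hoch}=Z^m_{Hoch}(\U,\T)$ as subspaces of $\mathrm{Hom}(\U^{\otimes m},\T)$, and $B^m_\R(\U,\T)=\mathrm{im}\,d_\R|_{\mathrm{Hom}(\U^{\otimes m-1},\T)}=\mathrm{im}\,\delta_{Hoch}|_{\mathrm{Hom}(\U^{\otimes m-1},\T)}=B^m_{Hoch}(\U,\T)$, since multiplying a map by the nonzero scalar $(-1)^{m-1}$ does not change its image. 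Therefore the quotients agree, $H^m_\R(\U,\T)=H^m_{Hoch}(\U,\T)$, and summing over $m$ gives the theorem.

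There is essentially no obstacle left at the level of this theorem, since all the hard work has been front-loaded: the genuinely delicate computation is the proof of the preceding proposition, i.e. matching $\llbracket\R,g\rrbracket-\tfrac12\llbracket\R,\R,g\rrbracket$ term-by-term with $(-1)^m\delta_{Hoch}(g)$ using the explicit formulas (\ref{eq28}) and (\ref{eq300}) and bookkeeping the Koszul signs; this is assumed here. The only point warranting a sentence of care in the write-up of the present theorem is the sign: one should note that $d_\R$ and $\delta_{Hoch}$ are not literally equal but differ by the degree-dependent sign $(-1)^m$, and explain (as above) why this does not affect cocycles, coboundaries, or the resulting cohomology — the degree-$m$ part of the chain map realizing the isomorphism is simply multiplication by $(-1)^{\lfloor m/2\rfloor}$ or, even more simply, the identity, since what matters is only that each $(-1)^m$ is invertible. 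I would phrase the conclusion as: "the identity map on each $\mathrm{Hom}(\U^{\otimes m},\T)$ carries the complex $(C^\bullet,d_\R)$ to the complex $(C^\bullet,\delta_{Hoch})$ up to an invertible scalar in each degree, hence induces an isomorphism on cohomology," which completes the proof.
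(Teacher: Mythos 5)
Your proposal is correct and is essentially the paper's own argument: the paper derives the theorem directly from the preceding proposition $d_\R(g)=(-1)^m\delta_{Hoch}(g)$, exactly as you do, observing that a degreewise unit scalar changes neither kernels nor images and hence not the cohomology. Your additional remarks on well-definedness of both complexes and on why the sign is harmless are sound and only make explicit what the paper leaves implicit.
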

\section{Deformations of $H$-TRB operator on associative conformal algebra}
In the current section, we focus on the linear and formal deformations of the conformal $H$-TRB operator within the associative conformal algebra context. To achieve this, we employ the use of a $H$-TRB (that arises from trivial linear deformations as a means to introduce Nijenhuis elements associated with it).
Additionally, we are able to establish a sufficient condition that proves the rigidity of $H$-TRB on associative conformal algebra through the analysis of these Nijenhuis elements.
\subsection{Linear deformations.}Assume  that  $\T$ is an associative conformal algebra, $\U$  is a conformal $\T$-bimodule  and  $H$ be a Hochschild $2$-cocycle.  Further, suppose that $\R$ is an $H$-TRB operator on an associative conformal algebra $\T$. An $H$-TRB operator can be linearly deformed by adding the parameterized sum $\R_t = \R + t\R_1$, for some $\mathbb{C}[\p]$-linear map $\R_1 \in Hom(\U, \T)$. This means that for all values of $t$, $\R_t$ remains an $H$-TRB operator.     In the context of linear deformation, we represent $\R_1$ as the generator. Hence, for the linear deformation equation given  by $\R_t = \R + t\R_1$, the following condition must be satisfied for $u, v \in \U$ and for all $t$
\begin{equation}
\R_t(u)_\l\R_t(v) = \R_t(u _\l \R_t(v) + \R_t(u)_\l v + H_\l(\R_t(u), \R_t(v)))
\end{equation} By correlating the coefficients of different powers of $t$, we have the following
\begin{equation}
\R(u)_\l\R(v) + \R(u)_\l\R(v) = \R(u_\l \R(v) + \R(u)_\l v + H_\l(\R (u), \R (v)))
\end{equation}
\begin{equation}\label{eq30}
\begin{aligned}
\R(u)_\l\R_1(v) + \R_1(u)_\l\R (v) =& \R(u_\l \R_1(v) + \R_1(u)_\l v + H_\l(\R_1 u, \R (v))+H_\l(\R (u), \R_1 (v)))\\&+ \R_1(u_\l \R(v) + \R(u)_\l v + H_\l(\R (u), \R (v))) 
\end{aligned}
\end{equation}
\begin{equation}
\begin{aligned}
\R_1(u)_\l\R_1(v)=&\R_1(u_\l \R_1(v)+\R_1(u)_\l v+ H_\l(\R_1(u), \R(v))+H_\l(\R(u), \R_1(v)) )\\&+\R H_\l(\R_1(u), \R_1(v))
\end{aligned}
\end{equation}
\begin{equation}
\R_1(H_\l(\R_1(u), \R_1(v)))= 0.
\end{equation}Observe that the  Eq. (\ref{eq30}) is equivalent to the fact that $\R_1$ is a $1$-cocycle in the cohomology of $\R$.
\begin{defn}
	A conformal $H$-TRB operators morphism  from $\R$ to $\R'$ on associative conformal algebra $\T$ is a pair $(\phi, \psi)$ consisting of  an algebra morphism $\phi: \T\to \T'$ and a linear map $\psi : \U \to \U$ that satisfies the following equations
	\begin{eqnarray}
	&\label{eq33}\psi(p _\l u) = \phi(p) _\l \psi(u)\textit{ and } \psi(u _{\l} p) = \psi(u)_{\l} \phi(p),\\&
	\label{eq333}
	\psi \circ H_\l = H'_\l\circ (\phi \otimes \phi),\\&\label{eq3333}
	\phi \circ \R = \R'\circ \psi,
	\end{eqnarray}for $p \in \T$, $u \in \U$ and $\l\in \mathbb{C}$ .
	\end{defn}
	\begin{defn}
	Two linear deformations 
	of a conformal $H$-TRB operator $\R$,  $\R_t = \R+ t\R_1 $ and $\R'_t = \R + t\R_1$ defined on associative conformal algebra are equivalent if there exists $p\in \T$ such that the pair $(\phi_t, \psi_t)$ given by
	\begin{equation*}
	(\phi_t= id_\l+t(ad_p^l- ad_p^r), \psi_t= id_\U+ t(\mathfrak{l}_p- \mathfrak{r}_p+ H_\l(p, \R-)- H_\l(\R-, p)))
	\end{equation*}is a conformal  $H$-TRB operators  morphism from $\R_t\to \R'_t$.
	\end{defn}
	Thus, an algebra morphism $\phi_t : \T \to \T$  implies that
	\begin{equation}\label{eq34}
	(p_\l q - q_{-\p-\l}p)_\m(p_\l r - r_{-\p-\l}p) = 0,\text{ for }q, r \in \T.
	\end{equation}
	The  Eq. (\ref{eq33}) that defines the morphism of conformal $H$-TRB operator can be equivalently written as 
	\begin{equation}\label{eq35}\left\{ 
	\begin{array}{ c l }
	&p_\l (q_\m u) - (q _\m u)_{-\p-\l}p + H_{\l}(p, \R (q_\m u))- H_{\l+\m}(\R (q _\m u), p)  \\&=(p_\l q - q_{-\p-\l}p)_{\l+\m} u + q _\m(p _\l u - u _{-\p-\l} p + H_\l(p, \R (u)) - H_\l(\R (u), p)), \\&
	(p_\l q - q_{-\p-\l}p) _\m (p _\l u - u_{-\p-\l} p + H_{\l}(p, \R (u)) - H_{\l}(\R (u), p)) = 0   \quad \text{ for } q \in \T,~~u \in \U.
	\end{array}
	\right.\end{equation}
	%\begin{equation}\label{eq35}\left\{ \begin{array}{ c l } &p_\l (b_\m u) - (b _\m u)_{-\p-\l}p + H_{\l}(p, \R (b_\m u))-  \\&H_{\l+\m}(\R (b _\m u), p)  =(p_\l b - b_{-\p-\l}p)_{\l+\m} u + b _\m(p _\l u - u _{-\p-\l} p + H_\l(p, \R (u)) - H_\l(\R (u), p)),\\&(p_\l b - b_{-\p-\l}p) _\m (p _\l u - u_{-\p-\l} p + H_{\l}(p, \R (u)) - H_{\l}(\R (u), p)) = 0, \end{array} \right. \end{equation} 
	Similarly, the Eqs. (\ref{eq333}) and (\ref{eq3333}) are respectively equivalent to the following
	\begin{equation}\label{eq36}
	\left\{ \begin{array}{ c l }
	p _\l H_\m(q, r) - H_\m(q, r) _{-\p-\l} p + \\H_{\l}(p, \R H_\m(q, r)) -H_{\l+\m}(\R H_\m(q, r), p) &= H_{\l+\m}(p_\l q-q_{-\p-\l}p, r) + H_{\m}(q, p_\l r - r_{-\p-\l}p),\\ H_{\l+\m}(p_\l q - q_{-\p-\l}p, p_\l r - r_{-\p-\l}p) &= 0,\quad { for } ~~q, r \in \T.
	\end{array} \right.
	\end{equation}
	\begin{equation}\label{eq37}
	\left\{ \begin{array}{ c l }
	\R_1(u) + p_\l \R(u) - \R(u)_{-\p-\l}p &= \R (p_\l u - u _{-\p-\l} p + H_\l(p, \R (u)) - H_\l (\R (u), p)) + \R'_1(u),\\
	p_\l\R_1(u) - \R_1(u)_{-\p-\l}p &= \R'_1(p _\l u - u _{-\p-\l}p + H_\l(p, \R (u)) - H_\l(\R (u), p))\end{array} \right. \end{equation}
	It is noted that  the first identity in Eq. (\ref{eq37}) suggests that  for $u\in \U$, we have $$\R_1(u)- \R'_1(u) = d_\R(p)_\l(u),$$  Thus, we have the following result.
	\begin{thm}
	Consider $\R_t = \R + t\R_1$ represent a linear deformation of $H$-TRB operator $\R$. In this scenario, $\R_1$ becomes a $1$-cocycle within the cohomology of $\R$. The  class of $\R_1$ solely relies on the equivalence class of the linear deformation $\R_t$.
	\end{thm}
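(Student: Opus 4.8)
The plan is to establish the two assertions separately, obtaining each from the coefficient of $t^1$ in the appropriate identity. For the first claim I would substitute $\R_t=\R+t\R_1$ into the linear deformation equation $\R_t(u)_\l\R_t(v)=\R_t\big(u_\l\R_t(v)+\R_t(u)_\l v+H_\l(\R_t(u),\R_t(v))\big)$ and expand both sides as polynomials in $t$. The constant term only restates that $\R$ is a conformal $H$-TRB operator, whereas the coefficient of $t$ is exactly Eq.~(\ref{eq30}). Transposing all terms of Eq.~(\ref{eq30}) to one side and comparing the result, term by term, with the formula for the differential in Eq.~(\ref{eq334}) applied to the $1$-cochain $g=\R_1\in\mathrm{Hom}(\U,\T)$, one sees that Eq.~(\ref{eq30}) is nothing but $d_\R(\R_1)=0$; hence $\R_1\in Z^1_\R(\U,\T)$ is a $1$-cocycle. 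The coefficients of $t^2$ and $t^3$ reproduce the remaining obstruction equations listed after Eq.~(\ref{eq30}), but they are not needed here.

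For the second claim, suppose $\R_t=\R+t\R_1$ and $\R'_t=\R+t\R'_1$ are equivalent linear deformations of $\R$. By definition there is an element $p\in\T$ for which the pair $(\phi_t,\psi_t)$ displayed in the definition of equivalence is a morphism of conformal $H$-TRB operators from $\R_t$ to $\R'_t$; in particular the intertwining relation Eq.~(\ref{eq3333}), $\phi_t\circ\R_t=\R'_t\circ\psi_t$, holds identically in $t$. Expanding this relation in powers of $t$ and reading off the coefficient of $t$ yields the first identity of Eq.~(\ref{eq37}), that is, $\R_1(u)-\R'_1(u)=d_\R(p)_\l(u)$ for all $u\in\U$. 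Consequently $\R_1-\R'_1=d_\R(p)\in B^1_\R(\U,\T)$, so $\R_1$ and $\R'_1$ represent the same class in $H^1_\R(\U,\T)$, which is precisely the assertion that the class of the infinitesimal depends only on the equivalence class of $\R_t$.

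The one delicate point is the identification, in the second part, of the linear term of $\phi_t\circ\R_t-\R'_t\circ\psi_t$ with $d_\R(p)$. Here one must insert the explicit infinitesimal generators of $\phi_t$ and $\psi_t$ — which, besides the adjoint-type terms, carry the $H$-dependent corrections $H_\l(p,\R-)$ and $H_\l(\R-,p)$ — then use the conformal bimodule axioms together with the morphism constraints Eqs.~(\ref{eq34})--(\ref{eq37}) that $p$ is assumed to satisfy, and finally recognise the surviving expression as $d_\R(p)_\l(u)$, the value on $u$ of the $1$-cochain obtained by applying $d_\R$ to the $0$-cochain $p\in\T=\mathrm{Hom}(\U^{\otimes 0},\T)$. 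This is a long but purely mechanical bookkeeping computation; no conceptual obstacle remains once the $t$-expansion of Eq.~(\ref{eq3333}) is organised and the first part has supplied the explicit form of $d_\R$ on cochains.
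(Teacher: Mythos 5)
Your proposal is correct and follows essentially the same route as the paper: the first claim is obtained by reading off the coefficient of $t$ in the deformation equation and matching it with the cocycle condition of Eq.~(\ref{eq334}) for $g=\R_1$, and the second by extracting the linear term of the intertwining relation $\phi_t\circ\R_t=\R'_t\circ\psi_t$, which is the first identity of Eq.~(\ref{eq37}) and gives $\R_1-\R'_1=d_\R(p)$. No substantive difference from the paper's argument.
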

	Building on the proceeding discussions, we define the concept of the Nijenhuis element linked with the  action of $H$-TRB operator on an associative conformal algebra $\T$.
	\begin{defn}
		An element $p\in \T$ is said to be  a Nijenhuis element associated with the $H$-TRB operator $\R$ if $ p $ satisfies the  following equation:
		\begin{equation}
		p_\m(\mathfrak{l}^\R_{\l} (u, p) - \mathfrak{r}^\R_{\l} (p, u)) -(\mathfrak{l}^\R_{\l} (u, p) - \mathfrak{r}^\R_{\l}(p, u))_{-\p-\m}p = 0
		\end{equation}In addition, Eqs. (\ref{eq34}), (\ref{eq35}), (\ref{eq36}), and (\ref{eq37})  must also hold.
	\end{defn}The set of all Nijenhuis elements associated with $\R$ is represented by $Nij(\R)$.  Based on the previous discussions, we infer that a Nijenhuis element $p$ can arise from the trivial linear deformation of $\R$.  In the subsequent subsection, we evaluate the rigidity of a conformal $H$-TRB operator and established the prerequisite for the rigidity, that can be described in relation to the Nijenhuis element.
	\subsection{Formal deformations} In the present subsection, we introduce the formal deformation theory to encompass conformal $H$-TRB operators. Suppose an associative conformal algebra $\T$, a conformal $\T$-bimodule $\U$, and a Hochschild $2$-cocycle $H$. It is important to note that the associative multiplication on $\T$ leads to an associative multiplication on $\T\llbracket t \rrbracket$, which represents the set of formal power series in $t$ with coefficients taken  from $\T$. Furthermore, the conformal $\T$-bimodule structure on $\U$ results in a conformal $\T\llbracket t \rrbracket$-bimodule structure on $\U\llbracket t \rrbracket$. In addition, the Hochschild $2$-cocycle $H$ yields a Hochschild $2$-cocycle (also denoted by $H$) on $\T\llbracket t \rrbracket$ with coefficients in the conformal bimodule $\U\llbracket t \rrbracket$.
	\begin{defn}Let $\R : \U \to \T[\l]$ be a conformal $H$-TRB operator. A formal one-parameter deformation of $\R$  is given by $\R_t = \sum_{i\geq 0}t_i\R_i \in Hom(\U, \T)\llbracket t \rrbracket$  which is a  formal sum with $\R_0 = \R$  in such a way that $\R_t : \U\llbracket t \rrbracket \to \T\llbracket t \rrbracket[\l]$ is a conforfmal $H$-TRB operator. Alternatively, for any $u_1, u_2 \in \U $, we have   the following  equation: $$\R_t(u_1)_\l\R_t(u_2) = \R_t({u_1}_\l \R_t(u_2) + \R_t(u_1) _\l u_2 + H_\l(\R_t(u_1), \R_t(u_2)).$$
	\end{defn}Thus, the following system of equations is valid in a formal deformation,  for $n\geq 0$, $$\sum_{i+j= n}\R_i(u_1)_\l\R_j (u_2) =\sum_{i+j= n}\R_i({u_1} _\l \R_j (u_2) + \R_j (u_1) _\l u_2) + \sum_{i+j+k= n} \R_i(H_\l(\R_j(u_1), \R_k(u_2))).$$The equations mentioned above are called deformation equations. Consider the following cases:
 \begin{enumerate}
     \item When  $n = 0$,  then $\R_0 = \R$ indicating that $\R$ is an $H$-TRB operator on the associative conformal algebra $\T$. 
     \item By using the same approach as in Eq. (\ref{eq30}), we  determine that when  $n = 1$, then  $\R_1$ corresponds to a $1$-cocycle within the cohomology of $\R$. This is referred to as the "infinitesimal of the deformation".
 \end{enumerate}
\begin{defn}
		Twoone-parameter formal deformations of a conformal $H$-TRB operator $\R$ denoted by $\R_t = \sum_{i\geq 0}t^i\R_i$ and $\R'_t = \sum_{i\geq 0}t^i\R'_i$ are considered  to be equivalent if there exist$\phi_i \in Hom(\T, \T)$ and $\psi_i \in Hom(\U, \U),$  for $i\geq 2$ being  two linear maps and an element $p \in \T$, such that the following equation holds for all values of $t$
		\begin{equation}
		(\phi_t= id_\l+ t(ad_p^l- ad_p^r)+ \sum_{i\geq 2}t^i\phi_i, \psi_t= id_\U+ t(\mathfrak{l}_p- \mathfrak{r}_p+ H_\l(p,\R-)- H_\l(\R-,p))+\sum_{i\geq 2}t^i\psi_i).
		\end{equation}This equation represents a conformal $H$-TRB operators'  morphism $\R_t\to \R'_t$.
	\end{defn}Similar to linear deformations, we obtain the following equation: $$\R_1(u)-\R'_1 (u) = d_\R(p)_\l(u)$$ for $u \in \U$. Thus, we derive the following result.
	\begin{thm}
		Let $\R_t =\sum_{i\geq 0}t^i\R_i$ represents a one-parameter formal deformation of a conformal $H$-TRB operator $\R$. In this case, the linear term $\R_1$ is a $1$-cocycle in the cohomology of conformal $H$-TRB operator $\R$. And its cohomology class is only determined by the equivalence class of the deformation $\R_t.$
	\end{thm}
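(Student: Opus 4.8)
The plan is to read off the content of the coefficient of $t^{1}$ in the deformation equations displayed just above, and to compare it with the cocycle and coboundary descriptions of Section~3.

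For the first assertion I would put $n=1$ in the deformation equations. Separating the sums $i+j=1$ and $i+j+k=1$ and using $\R_{0}=\R$, one finds that the resulting identity is exactly Eq.~(\ref{eq30}). As was already noted there, Eq.~(\ref{eq30}) is equivalent to $d_{\R}(\R_{1})=0$, i.e.\ to the cocycle condition Eq.~(\ref{eq334}) with $g=\R_{1}$; equivalently, through the proposition identifying $d_{\R}$ with $(-1)^{m}\delta_{Hoch}$ and the ensuing isomorphism $H^{\circledcirc}_{\R}(\U,\T)\simeq H^{\circledcirc}_{Hoch}(\U,\T)$, it says that $\R_{1}$ is a Hochschild $1$-cocycle of $(\U,*_{\l})$ with values in $\T$. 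Thus $\R_{1}\in Z^{1}_{\R}(\U,\T)$ and it determines a class $[\R_{1}]\in H^{1}_{\R}(\U,\T)$. This step is a direct coefficient comparison, entirely parallel to the $n=1$ analysis in the linear case, and should present no difficulty.

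For the second assertion I would take two equivalent formal deformations $\R_{t}$ and $\R'_{t}=\sum_{i\geq0}t^{i}\R'_{i}$, related by a pair $(\phi_{t},\psi_{t})$ of the prescribed form; in particular $\phi_{0}=\mathrm{id}_{\T}$, $\psi_{0}=\mathrm{id}_{\U}$, the $t^{1}$-coefficients are $ad_{p}^{l}-ad_{p}^{r}$ and $\mathfrak{l}_{p}-\mathfrak{r}_{p}+H_{\l}(p,\R-)-H_{\l}(\R-,p)$, and all $\phi_{i},\psi_{i}$ with $i\geq2$ will be irrelevant at order $t^{1}$. The morphism condition contains the intertwining relation $\phi_{t}\circ\R_{t}=\R'_{t}\circ\psi_{t}$, the $\R_{t}\to\R'_{t}$ analogue of Eq.~(\ref{eq3333}). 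I would compare the coefficients of $t^{1}$ on the two sides: using $\phi_{0}=\mathrm{id}$ and $\psi_{0}=\mathrm{id}$ this yields the first identity of Eq.~(\ref{eq37}), and rearranging it exhibits $\R_{1}(u)-\R'_{1}(u)$ as the combination
$$\R\bigl(p_{\l}u-u_{-\p-\l}p+H_{\l}(p,\R u)-H_{\l}(\R u,p)\bigr)-p_{\l}\R(u)+\R(u)_{-\p-\l}p,$$
which, by the explicit form of $d_{\R}$ on the $0$-cochain $p\in\T=Hom(\U^{\otimes0},\T)$ (i.e.\ Eq.~(\ref{eq40}) at $m=0$ together with $d_{\R}=(-1)^{m}\delta_{Hoch}$), is exactly $d_{\R}(p)_{\l}(u)$. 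Hence $\R_{1}-\R'_{1}=d_{\R}(p)\in B^{1}_{\R}(\U,\T)$, so $[\R_{1}]=[\R'_{1}]$ in $H^{1}_{\R}(\U,\T)$, which is the claim that this class depends only on the equivalence class of $\R_{t}$.

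The hard part will be purely bookkeeping: isolating the $t^{1}$-coefficient of $\phi_{t}\circ\R_{t}=\R'_{t}\circ\psi_{t}$ correctly and recognising the resulting expression as $d_{\R}(p)_{\l}(u)$. This is the same computation already carried out for linear deformations above, transported verbatim; one has to be attentive to the conformal sesquilinearity conventions (the symbol $u_{-\p-\l}p$ encodes the right-action contribution in the Hochschild differential) and to the sign $(-1)^{m}$ relating $d_{\R}$ and $\delta_{Hoch}$. No conceptual obstacle arises, since both assertions reduce to matching low-order coefficients against the cocycle and coboundary formulas established in Section~3.
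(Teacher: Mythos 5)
Your proposal is correct and follows essentially the same route as the paper: the paper likewise obtains the cocycle condition for $\R_1$ by reading off the $n=1$ deformation equation (which coincides with the linear-deformation identity), and establishes $\R_1(u)-\R'_1(u)=d_\R(p)_\l(u)$ by the same order-$t$ comparison of the morphism condition already carried out in the linear case. No substantive difference in approach.
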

	In the subsequent theorem, we show the constraint when  the deformed  conformal $H$-TRB operator $\R_t$ becomes equivalent to the  undeformed $H$-TRB operator $\R$.
	\begin{thm}
		Consider a conformal $H$-TRB operator  $\R$ that fulfills $Z^1_\R (\U,\T) = d_\R (Nij(\R))$. In this case, any formal deformation $\R_t$ of conformal $H$-TRB operator $\R$ is equal to the conformal $H$-TRB operator $\R$ that is not deformed, i.e., $\R'_t = \R$.
	\end{thm}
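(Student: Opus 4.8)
The plan is to mimic the classical rigidity argument of Gerstenhaber, adapted to the conformal $H$-TRB setting. Suppose $\R_t = \sum_{i \ge 0} t^i \R_i$ is a formal deformation of $\R$ with $\R_0 = \R$. Let $k \ge 1$ be the smallest index with $\R_k \neq 0$, so that $\R_t = \R + t^k \R_k + t^{k+1}\R_{k+1} + \cdots$. Extracting the coefficient of $t^k$ in the deformation equation (exactly as was done for $n = 1$ when only $\R_0$ and $\R_1$ appear), one finds that $\R_k$ satisfies Eq.~(\ref{eq334}), i.e.\ $d_\R(\R_k) = 0$, so $\R_k \in Z^1_\R(\U, \T)$. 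By the hypothesis $Z^1_\R(\U, \T) = d_\R(Nij(\R))$, there is a Nijenhuis element $p \in Nij(\R)$ with $\R_k = d_\R(p)(u)$ in the notation $\R_k(u) = d_\R(p)_\l(u)$.

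Next I would use this Nijenhuis element to build an equivalence that kills the lowest-order term. Since $p \in Nij(\R)$, by definition it satisfies Eqs.~(\ref{eq34}), (\ref{eq35}), (\ref{eq36}), (\ref{eq37}) together with the Nijenhuis identity; these are precisely the conditions needed for the pair
\begin{equation*}
\bigl(\phi_t = id_\l + t^k(ad_p^l - ad_p^r),\ \psi_t = id_\U + t^k(\mathfrak{l}_p - \mathfrak{r}_p + H_\l(p, \R-) - H_\l(\R-, p))\bigr)
\end{equation*}
to define a conformal $H$-TRB operators' morphism (here one uses the $t^k$-graded version of the linear-deformation equivalence, noting that $\phi_t$ is still an algebra morphism because Eq.~(\ref{eq34}) is homogeneous in the perturbation). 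Applying this morphism to $\R_t$ produces an equivalent deformation $\bar\R_t = \sum_{i\ge 0} t^i \bar\R_i$ with $\bar\R_i = \R_i$ for $i < k$ (so $\bar\R_i = 0$ for $0 < i < k$, $\bar\R_0 = \R$) and, by the analogue of the identity $\R_1(u) - \R'_1(u) = d_\R(p)_\l(u)$ shifted to degree $k$, $\bar\R_k(u) = \R_k(u) - d_\R(p)_\l(u) = 0$. Thus $\bar\R_t = \R + t^{k+1}\bar\R_{k+1} + \cdots$ has its first nontrivial term in strictly higher degree.

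Finally, I would iterate: repeating the argument with $\bar\R_t$ in place of $\R_t$ removes the next term, and inductively every term of positive degree can be eliminated, so the deformation is equivalent to the trivial one, $\R'_t = \R$. A cleaner packaging is to run this as a single induction on $k$, showing that any deformation whose lowest-degree term is in degree $k$ is equivalent to one whose lowest-degree term is in degree $k+1$, hence (since formal equivalence is compatible with the $t$-adic filtration and the composite of the successive equivalences converges $t$-adically) equivalent to $\R$ itself.

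The main obstacle I anticipate is verifying carefully that the pair $(\phi_t, \psi_t)$ above is genuinely a conformal $H$-TRB morphism in the $t^k$-graded setting, i.e.\ that the Nijenhuis conditions (\ref{eq34})--(\ref{eq37}) — which were derived in the paper for the degree-one perturbation — remain exactly the right conditions when the perturbation parameter is $t^k$ rather than $t$. Because those equations are homogeneous of the appropriate degrees in $p$ (each is quadratic or compatible with the single insertion of $p$), the substitution $t \mapsto t^k$ should go through unchanged, but this homogeneity must be checked term by term; this, together with confirming that the resulting $\bar\R_t$ indeed still satisfies the full deformation equation in all degrees, is where the real work lies. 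The convergence of the infinite sequence of equivalences is a standard $t$-adic completeness argument and should present no difficulty.
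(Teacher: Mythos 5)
Your proposal is correct and follows essentially the same route as the paper: realize the lowest-order nonzero coefficient of $\R_t$ as $d_\R(p)$ for some $p\in Nij(\R)$ via the hypothesis, conjugate by the associated pair $(\phi_t,\psi_t)$ to push the first nontrivial term to strictly higher order, and iterate with $t$-adic convergence. Your induction on the order $k$ of the first nonvanishing term, with the $t^k$-graded morphism conditions checked explicitly, is simply a more careful rendering of the step the paper dispatches with ``the identical approach for mod $t^n$ for $n\geq 3$.''
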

	\begin{proof} Assuming that any formal deformation of $\R$ is represented by $\R_t = \sum_{i\geq 0}t^i\R_i$ . According to the previous theorem, we conclude that the linear term $\R_1\in Z^1_{\R}(\U,\T)$. Thus based on the assumption, there is an element $p\in Nij(\R)$ that satisfies $\R_1 = d_\R(p)$. We  define it as follows
		\begin{equation}
		(\phi_t=id_\l+t(ad_p^l-ad_p^r), \psi_t=id_\U+ t(\mathfrak{l}_p- \mathfrak{r}_p+ H_\l(p,\R-)- H_\l(\R-,p)))
		\end{equation}
		and assume $\R'_t:= \phi_t \R_t  \psi^{-1}_t$. Then by our assumption, $\R_t$ is equivalent to $\R'_t$. Moreover, we get
		\begin{equation*}
		\begin{aligned}
		&\R'_t(u):= \phi_t  \R_t  \psi^{-1}_t\\&
		=(id_\l+t(ad_p^l-ad_p^r))(\R+t\R_1)(id_\U- t(\mathfrak{l}_p- \mathfrak{r}_p+ H_\l(p,\R-)- H_\l(\R-,p)))(u)\\&= (id_\l+t(ad_p^l-ad_p^r))(\R(u)+t\R_1(u)- t(\R(p_\l u- u_{-\p-\l}p+ H_\l(p,\R (u))- H_\l(\R (u),p)))\\&-t^2\R_1(p_\l u- u_{-\p-\l}p+ H_\l(p, \R (u))- H_\l(\R (u),p)))\text{  (mod  $t^2$) }\\&=(id_\T+t(ad_p^l-ad_p^r))(\R(u)+t(\R_1(u)-\R(p_\l u- u_{-\p-\l}p+ H_\l(p,\R( u))- H_\l(\R (u),p)))) \text { (mod  $t^2$) }\\&=\R(u)+t(\R_1(u)-\R(p_\l u- u_{-\p-\l}p+ H_\l(p,\R( u))- H_\l(\R (u),p)))+t(ad_p^l-ad_p^r)\R(u) \text{ (mod  $t^2$) }\\&=\R(u)+t(\R_1(u)-\R(p_\l u- u_{-\p-\l}p+ H_\l(p,\R (u))- H_\l(\R (u),p)))+ t(p_\l\R(u)- \R(u)_{-\p-\l}p)\\&= \R(u)+t(\R_1(u)-\R(p_\l u- u_{-\p-\l}p+ H_\l(p,\R (u))- H_\l(\R (u),p))+p_\l\R(u)-\R(u)_{-\p-\l}p)
		\\&=\R(u)
		\end{aligned}
		\end{equation*} Last equality is acquired by employing the the fact that $\R_1 = d_\R(p)$.     This means that the coefficient of $t$ in $\R'_t$  becomes equal to zero. By involving the identical approach for $mod~t^n$ for $n\geq 3$, one establishes the equivalence between the deformations $\R$  and $\R_t$. It concludes the proof.\end{proof}
	\begin{rem}A conformal $H$-TRB operator $\R$ on an associative conformal algebra is called as rigid if any formal deformation of $\R$ can be transformed into the original undeformed state. Hence, the earlier theorem states that rigidity  of $\R$ can be sufficiently ensured by the condition $Z^{1}_{\R}(\U,\T)= d_{\R}(Nij(\R))$.
	\end{rem}
	
\end{document}